\DeclareMathAlphabet{\mathpzc}{OT1}{pzc}{m}{it}
\DeclareMathOperator{\yoneda}{\mathpzc{y}}
\newtheorem{theorem}{{\bf Theorem}}[section]
\newtheorem{proposition}{\bf Proposition}[section]
\newtheorem{lemma}{\bf Lemma}[section]
\newtheorem{definition}{\bf Definition}[section]
\declaretheoremstyle[%
  spaceabove=-3pt,%
  spacebelow=7pt,%
  headfont=\normalfont\itshape,%
  postheadspace=1em,%
  qed=\qedsymbol%
]{mystyle} 
\declaretheorem[name={Proof},style=mystyle,unnumbered,
]{prf}
\newcommand{\categ}[1]{\mathsf{#1}}
\newcommand{\Cat}[1]{#1\text{-}\categ{Cat}}
\newcommand{\Rell}[1]{#1\text{-}\categ{Rel}}
\def\Set{\categ{Set}}
\def\BiRel{\categ{BiRel}}
\def\Top{\categ{Top}}
\def\BiTop{\categ{BiTop}}
\def\Pfn{\categ{Pfn}}
\def\App{\categ{App}}
\def\NA{\categ{NA}}  
\def\UApp{\categ{UApp}}
\def\Ord{\categ{Ord}}
\def\MultiOrd{\categ{MultiOrd}}
\def\Met{\categ{Met}}
\def\UltMet{\categ{UltMet}}
\def\ProbMet{\categ{ProbMet}}
\def\Equ{\categ{Equ}}
\def\PEqu{\categ{PEqu}}
\def\ALat{\categ{ALat}}
\def\Assm{\categ{Assm}}
\def\Mdst{\categ{Mdst}}
\def\one{\categ{1}}
\def\two{\categ{2}}
\def\V{\categ{V}}
\def\cX{\categ{X}}
\def\C{\mathcal{C}}
\def\x{\mathfrak{x}}
\def\w{\mathfrak{w}}
\def\U{\mathbbm{U}}
\def\I{\mathbbm{I}}
\newcommand{\T}{\mathbbm{T}}
\newcommand{\mL}{\mathbbm{L}}
\newcommand{\mM}{\mathbbm{M}}
\def\rel{{\longrightarrow\hspace*{-2.8ex}{\mapstochar}\hspace*{2.8ex}}}
\def\hom{{\rm hom}}
\newcommand{\X}{\mathcal{X}}
\newcommand{\Y}{\mathcal{Y}}
\newcommand{\Z}{\mathcal{Z}}
\newcommand{\mP}{\mathcal{P}} 
\title{On generalized equilogical spaces}
\author{Willian Ribeiro}
\address{CMUC, Department of Mathematics, University of Coimbra, 3001-501 Coimbra, Portugal}
\email{willian.ribeiro.vs@gmail.com}
\thanks{Research supported by Centro de Matem\'{a}tica da Universidade de Coimbra -- UID/MAT/00324/2013 and by the FCT PhD Grant PD/BD/128059/2016, funded by the Portuguese Government through FCT/MCTES and co-funded by the European Regional Development Fund through the Partnership Agreement PT2020.}
\keywords{equilogical space, topological category, exact completion, regular completion, quasitopos, $(\T,\V)$-category, modest set}
\subjclass[2010]{54A05, 54B30, 18D15, 54D80, 18B35}
\begin{document}

\begin{abstract}
In this paper we carry the construction of equilogical spaces into an arbitrary category $\cX$ topological over $\Set$, introducing the category $\cX$-$\Equ$ of equilogical objects. Similar to what is done for the category $\Top$ of topological spaces and continuous functions, we study some features of the new category as (co)complete\-ness and regular (co-)well-poweredness, as well as the fact that, under some conditions, it is a quasitopos. We achieve these various properties of the category $\cX$-$\Equ$ by representing it as a category of partial equilogical objects, as a reflective subcategory of the exact completion $\cX_{_{{\rm ex}}}$, and as the regular completion $\cX_{_{{\rm reg}}}$. We finish with examples in the particular cases, amongst others, of ordered, metric, and approach spaces, which can all be described using the $\Cat{(\T,\V)}$ setting.
\end{abstract}

\maketitle

\section*{Introduction}

As a solution to remedy the problem of non-existence of general exponentials in $\Top$, Scott presents first in \cite{ANCDSER}, and later with his co-authors Bauer and Birkedal in \cite{MR2072989}, the category $\Equ$ of equilogical spaces. Formed by equipping topological $T_{_{0}}$-spaces with arbitrary equivalence relations, $\Equ$ contains $\Top_{_{0}}$ ($T_{_{0}}$-spaces and continuous functions) as a full subcategory and it is cartesian closed. This fact is directly proven by showing an equivalence with the category $\PEqu$ of partial equilogical spaces, which is formed by equipping algebraic lattices with partial (not necessarily reflexive) equivalence relations. Also in \cite{MR2072989}, equilogical spaces are presented as modest sets of assemblies over algebraic lattices, offering a model for dependent type theory. 

Contributing to the study of $\Equ$, a more general categorical framework, explaining why such (sub)catego\-ries are (locally) cartesian closed, was presented in \cite{MR1659606, MR1787592, MR1721095}. It turned out that $\Equ$ is related to the free exact completion $(\Top_{_{0}})_{_{{\rm ex}}}$ of $\Top_{_{0}}$ \cite{MR1358759, MR678508, MR1600009}. By the same token, suppressing the $T_{_{0}}$-separation condition on the topological spaces, the category $\Equ$ is a full reflective subcategory of the exact completion $\Top_{_{{\rm ex}}}$ of $\Top$. More, the reflector preserves products and special pullbacks, from where it is concluded in \cite{MR1659606} that $\Equ$ is locally cartesian closed, since $\Top_{_{{\rm ex}}}$ is so \cite[Theorem 4.1]{MR1659606}. It is shown in \cite{esfsgr} that $\Equ$ can be presented as the free regular completion of $\Top$ \cite{MR1358759, MR1600009}, and \cite{ecatmm} provides conditions for such regular completions to be quasitoposes. 

In this paper we start with a category $\cX$ and a topological functor $|$-$|\colon\cX\to\Set$, and, equipping each object $X$ of $\cX$ with an equivalence relation on its underlying set $|X|$, we define the category $\cX$-$\Equ$ of equilogical objects and their morphisms. Recovering the results for the particular case of $\Top$, $\cX$-$\Equ$ is (co)complete and regular (co-)well-powered. Under the hypothesis of pre-order-enrichment, we explore the concepts of separated and injective objects of $\cX$, leading us to the definition of a category $\cX$-$\PEqu$ of partial equilogical objects. In the presence of a separation condition, we proceed presenting $\cX$-$\Equ$ as modest sets of assemblies over injective objects; from that, we verify its properties of cartesian closedness and regularity. This is the subject of our first section. 

In Section \ref{sec3}, analogously to the case of $\Top$, we get similar results when considering the exact completion $\cX_{_{{\rm ex}}}$ and the regular completion $\cX_{_{{\rm reg}}}$ of $\cX$, culminating in the fact that $\cX$-$\Equ$ is a quasitopos, by the results of \cite{ecatmm}. To do so, we use a general approach to study weak cartesian closedness of topological categories (see \cite{CHRCCECT}).       

We finish with Section \ref{sec4}, where we briefly recall the $\Cat{(\T,\V)}$ setting, which was introduced in \cite{MR1957813} and further investigated in other papers \cite{MR1990036, MR2355608}, and study the case when $\cX=\Cat{(\T,\V)}$, for a suitable monad $\T$ and quantale $\V$, satisfying all conditions needed throughout the paper. Examples of such categories are $\Ord$ of preordered sets, $\Met$ of Lawvere generalized metric spaces \cite{MR1925933} and $\App$ of Lowen approach spaces \cite{MR1472024}, amongst others. Based on full embeddings among those categories, we place full embeddings among their categories of equilogical objects.

\section{The category of (partial) equilogical objects}\label{sec2}

Let $\cX$ be a category and $|$-$|\colon\cX\to\Set$ be a topological functor. In particular $\cX$ is complete, cocomplete, and $|$-$|$ preserves both limits and colimits. 

\begin{definition}\label{def11}{\rm The category $\cX$-$\Equ$ is defined as follows. \\
$\bullet$ The objects are structures $\X=\left\langle X,\equiv_{_{|X|}}\right\rangle$, where $X\in\cX$ and $\equiv_{_{|X|}}$ is an equivalence relation on the set $|X|$; they are called {\it equilogical objects of $\cX$}. \\
$\bullet$ A morphism from $\X=\left\langle X,\equiv_{_{|X|}}\right\rangle$ to $\Y=\left\langle Y,\equiv_{_{|Y|}}\right\rangle$ is the equivalence class of a morphism $f\colon X\to Y$ in $\cX$ such that $|f|$ is an {\it equivariant} map, i.e. $x\equiv_{_{|X|}}x'$ implies $|f|(x)\equiv_{_{|Y|}}|f|(x')$, for all $x,x'\in|X|$, with the equivalence relation on morphisms defined by
\begin{align*}f\equiv_{_{\X\to\Y}}g \ \Longleftrightarrow \ \forall \ x,x'\in|X|, \ (x\equiv_{_{|X|}}x' \ \Longrightarrow \ |f|(x)\equiv_{_{|Y|}}|g|(x')).\end{align*}}\end{definition}

One can see that $\equiv_{_{\X\to\Y}}$ is indeed an equivalence relation; reflexivity follows from the fact that the underlying maps are equivariant, symmetry and transitivity follow from the same properties for $\equiv_{_{|X|}}$ and $\equiv_{_{|Y|}}$.

Identity of $\X$ is given by $[1_{_{X}}]$ and composition of classes $[f]\colon\X\to\Y$ and $[g]\colon\Y\to\Z$ is given by $[g]\cdot[f]=[g\cdot f]$, which is well-defined. 

\begin{theorem}\label{theo1}$\cX$-$\Equ$ is complete, cocomplete, regular well-powered and regular co-well-powered.\end{theorem}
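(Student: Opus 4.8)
The plan is to build limits and colimits in $\cX$-$\Equ$ by lifting the corresponding (co)limits from $\cX$ along the forgetful-type functor $\cX\text{-}\Equ \to \cX$ that sends $\X = \langle X, \equiv_{_{|X|}}\rangle$ to $X$, while simultaneously handling the equivalence-relation data on the underlying sets. The key observation is that $\cX$ is complete and cocomplete and $|$-$|$ preserves all limits and colimits, so the underlying sets of $\cX$-(co)limits are the $\Set$-(co)limits; I then need only equip these with the canonical equivalence relations and verify the universal properties modulo $\equiv_{_{\X\to\Y}}$.

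**Completeness.** First I would treat products and equalizers separately (or, equivalently, arbitrary limits directly). For a family $\X_i = \langle X_i, \equiv_{_{|X_i|}}\rangle$, I would take $X = \prod_i X_i$ in $\cX$ and define $x \equiv_{_{|X|}} x'$ iff $\pi_i(x) \equiv_{_{|X_i|}} \pi_i(x')$ for all $i$, i.e.\ the pointwise equivalence relation on $|X| = \prod_i |X_i|$. For equalizers of $[f],[g]\colon \X \to \Y$, the subtlety is that the representing morphisms need not be literally equal, so the equalizer should be built from the subobject of $X$ on which $f$ and $g$ become equivariantly indistinguishable; concretely I would take the $\cX$-equalizer of a suitable pair, or an appropriate subobject with the restricted equivalence relation, and then check that a cone $[h]$ factors uniquely \emph{as an equivalence class}. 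Verifying the universal property up to $\equiv$ rather than on the nose is where care is needed, since uniqueness of factorizations holds only modulo the morphism-level equivalence relation defined in Definition~\ref{def11}.

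**Cocompleteness.** Dually, for coproducts I would take $X = \coprod_i X_i$ in $\cX$ with the induced equivalence relation on $|X| = \coprod_i |X_i|$ (the disjoint union of the $\equiv_{_{|X_i|}}$). Coequalizers are the genuinely interesting construction: given $[f],[g]\colon \X \to \Y$, I would form the $\cX$-coequalizer (or simply take $Y$ itself) and then \emph{enlarge} the equivalence relation on $|Y|$ to the smallest equivalence relation containing $\equiv_{_{|Y|}}$ together with all pairs $(|f|(x), |g|(x'))$ for $x \equiv_{_{|X|}} x'$. The point is that coequalizing $[f]$ and $[g]$ forces their images to be identified, and since we work with equivalence classes of morphisms, this identification can be absorbed into the relation on $Y$ rather than by quotienting the object $Y$ in $\cX$. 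I would then verify that this yields the coequalizer in $\cX$-$\Equ$.

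**Regular (co-)well-poweredness.** After establishing (co)completeness, I would identify the regular monomorphisms and regular epimorphisms in $\cX$-$\Equ$ concretely, expressing a regular subobject via an equivalence relation that refines $\equiv_{_{|X|}}$ on a fixed underlying object (bounding regular subobjects by subobjects in $\cX$ together with the set of refining equivalence relations), and dually for regular quotients. Since $\cX$ is itself well-powered and co-well-powered (being topological over $\Set$), and the set of equivalence relations on a fixed set is a set, both bounds follow. The main obstacle throughout is bookkeeping at the level of equivalence classes of morphisms: every universal property must be checked to hold \emph{up to} $\equiv_{_{\X\to\Y}}$, and one must confirm that the constructed relations are the correct (coarsest/finest) ones making the structure maps equivariant so that the factorizations exist and are unique as classes.
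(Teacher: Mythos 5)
Your proposal is correct and follows essentially the same route as the paper, whose proof is a sketch deferring to Bauer--Birkedal--Scott: (co)limits are formed from the corresponding constructions in $\cX$ with suitable equivalence relations on the underlying sets (pointwise on products, restricted on the equalizer subobject, disjoint on coproducts, and for coequalizers keeping the codomain and enlarging the relation rather than quotienting in $\cX$), with universal properties verified up to $\equiv_{_{\X\to\Y}}$. Regular well- and co-well-poweredness are then read off from the explicit descriptions of equalizers and coequalizers, exactly as the paper indicates.
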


The proof of Theorem \ref{theo1} goes along the general lines of the proof of \cite[Theorem 3.10]{MR2072989}. Limits and colimits are computed in $\cX$ and their underlying sets are endowed with appropriate equivalence relations. The properties of regular well- and regular co-well-poweredness follow from the description of equalizers and coequalizers in $\cX$-$\Equ$. 

In general $\cX$-$\Equ$ is neither well-powered nor co-well-powered, as observed in \cite{MR2072989} for topological spaces. A morphism $[m]\colon\X\to\Y$ is a monomorphism in $\cX$-$\Equ$ if, and only if, 
\begin{align*}x\equiv_{_{|X|}}x' \ \Longleftrightarrow \ |m|(x)\equiv_{_{|Y|}}|m|(x'), \ \forall \ x,x'\in|X|.\end{align*}
A morphism $[f]\colon\X\to\Y$ is an epimorphism in $\cX$-$\Equ$ if, and only if, 
\begin{align*}y\equiv_{_{|Y|}}y' \ \Longleftrightarrow \ \exists \ x,x'\in|X|; \ x\equiv_{_{|X|}}x' \ \with \ y\equiv_{_{|Y|}}|f|(x)\equiv_{_{|Y|}}|f|(x')\equiv_{_{|Y|}}y'.\end{align*}

Having the {\it embedding} and the {\it extension} theorems configured for powersets \cite[Theorems 3.6, 3.7]{MR2072989}, according to the authors, Scott has pointed out that those results in fact hold more generally to continuous lattices. Powersets can be generalized to algebraic lattices, and it is explained that {\it ``The reason for considering algebraic lattices is that the lattice of continuous functions between powerset spaces is not usually a powerset space, but it is an algebraic lattice. And this extends to all algebraic lattices.''}, culminating in the well known fact that the category $\ALat$ of algebraic lattices and Scott-continuous functions is cartesian closed \cite[Chapter II, Theorem 2.10]{MR614752}. 

Algebraic lattices are in particular continuous lattices, therefore injective objects in $\Top_{_{0}}$. We show below that these -- injectivity and separation -- are the crucial properties in order to extend the arguments of \cite{MR2072989}. Next we assume that \\[0.4cm]
{\bf (a)} {\it $\cX$ is a pre-order enriched category.}

\begin{definition}\label{def201}{\rm An object $X$ of $\cX$ is said to be {\it separated} if, for each morphisms $f,g\colon Y\to X$ in $\cX$, whenever $f\simeq g$ ($f\leq g$ and $g\leq f$), then $f=g$.}\end{definition}

Hence an object $X$ is separated if, for each object $Y$, the pre-ordered set of morphisms $\cX(Y,X)$ is anti-symmetric. One can check that this is equivalent to the pre-ordered set $\cX(\one,X)$ to be anti-symmetric, where $\one=L\{*\}$, with $L\colon\Set\to\cX$ the left adjoint of $|$-$|$. 

The full subcategory $\cX_{_{{\rm sep}}}$ of separated objects is replete and closed under mono-sources. Since $({\rm RegEpi},\mathbb{M})$ is a factorization system for sources in the topological category $\cX$, where $\mathbb{M}$ stands for the class of mono-sources \cite[Proposition 21.14]{MR1051419}, closure under mono-sources then implies that $\cX_{_{{\rm sep}}}$ is regular epi-reflective in $\cX$ \cite[II-Proposition 5.10.1]{MR3307673}. 

We will consider (pseudo-)injective objects of $\cX$ with respect to $|$-$|$-initial morphisms. Hence, denoting by $\cX_{_{{\rm inj}}}$ the full subcategory on the injectives, $Z\in\cX_{_{{\rm inj}}}$ if, and only if, for each $|\text{-}|$-initial morphism $y\colon X\to Y$ and morphism $f\colon X\to Z$, there exists a morphism $\hat{f}\colon Y\to Z$ such that $\hat{f}\cdot y\simeq f$;
\begin{align*}\xymatrix{X \ar[rr]^{y} \ar[rd]_{f} \ar@{}[drr]^(0.45){\simeq} & & Y \ar[ld]^{\hat{f}} \\ & Z & }\end{align*}
$\hat{f}$ is called an {\it extension} of $f$ along $y$; if $Z$ is separated, then $\hat{f}\cdot y=f$. Moreover, assume that \\[0.4cm]
{\bf (b)} {\it for each $X,Y\in\cX$, $x,x'\in\cX(\one,X)$ and $|$-$|$-initial $f\in\cX(X,Y)$, 
\begin{align*}f\cdot x\simeq f\cdot x' \ \Longrightarrow \ x\simeq x'.\end{align*}}
\noindent Thus, for $X$ separated, if $f$ is $|$-$|$-initial, then $f$ is an embedding (regular monomorphism, which in our case is equivalent to $|$-$|$-initial with $|f|$ an injective map); hence restricting ourselves to the separated objects, injectivity with respect to $|$-$|$-initial morphisms coincides with injectivity with respect to embeddings. 

\begin{definition}\label{def21}{\rm The category $\cX$-$\PEqu$ of {\it partial equilogical objects of $\cX$} consists of: \\
$\bullet$ objects are structures $\X=\left\langle X,\equiv_{_{|X|}}\right\rangle$, where $X\in\cX_{_{{\rm inj}}}$ and $\equiv_{_{|X|}}$ is a partial (not necessarily reflexive) equivalence relation on the set $|X|$; \\
$\bullet$ a morphism from $\left\langle X,\equiv_{_{|X|}}\right\rangle$ to $\left\langle Y,\equiv_{_{|Y|}}\right\rangle$ is the equivalence class of an $\cX$-morphism $f\colon X\to Y$ such that $|f|$ is an equivariant map, with the equivalence relation on morphisms as in Definition \ref{def11}.}\end{definition}

In order to verify an equivalence between the categories of equilogical and partial equilogical objects, we will restrict ourselves to the separated objects, so we consider now that the objects in the structures of Definitions \ref{def11} and \ref{def21} are all separated, and denote the resulting categories by $\cX$-$\Equ_{_{{\rm sep}}}$ and $\cX$-$\PEqu_{_{{\rm sep}}}$, respectively. We also assume that \\[0.4cm]
{\bf (c)} {\it $\cX$ has enough injectives, meaning that, for each $X\in\cX$, there exists an $|$-$|$-initial morphism $\yoneda_{_{X}}\colon X\to\hat{X}$, with $\hat{X}\in\cX_{_{{\rm inj}}}$, and, if $X$ is separated, so is $\hat{X}$.} \\[0.4cm]
When $X$ is separated, as we have seen before, $\yoneda_{_{X}}$ is an embedding.

\begin{theorem}\label{teo21}$\cX$-$\Equ_{_{{\rm sep}}}$ and $\cX$-$\PEqu_{_{{\rm sep}}}$ are equivalent.\end{theorem}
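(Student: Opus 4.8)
The plan is to exhibit the equivalence through a pair of functors $F\colon\cX$-$\Equ_{_{{\rm sep}}}\to\cX$-$\PEqu_{_{{\rm sep}}}$ and $G\colon\cX$-$\PEqu_{_{{\rm sep}}}\to\cX$-$\Equ_{_{{\rm sep}}}$ together with natural isomorphisms $GF\cong\mathrm{id}$ and $FG\cong\mathrm{id}$, following the pattern of the classical equivalence $\Equ\simeq\PEqu$ of \cite{MR2072989}. To build $F$, take an object $\X=\left\langle X,\equiv_{_{|X|}}\right\rangle$ with $X$ separated and use hypothesis {\bf (c)} to fix the embedding $\yoneda_{_{X}}\colon X\to\hat{X}$ into the separated injective $\hat{X}$; by {\bf (b)}, $\yoneda_{_{X}}$ is an embedding, so $|\yoneda_{_{X}}|$ is injective. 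Set $F\X=\left\langle\hat{X},\approx\right\rangle$, where $\approx$ is the image of $\equiv_{_{|X|}}$ along $|\yoneda_{_{X}}|$, i.e. $a\approx b$ iff $a=|\yoneda_{_{X}}|(x)$, $b=|\yoneda_{_{X}}|(x')$ for some $x\equiv_{_{|X|}}x'$. Injectivity of $|\yoneda_{_{X}}|$ makes $\approx$ a partial equivalence relation whose domain is exactly the image of $|X|$, and $\hat{X}\in\cX_{_{{\rm inj}}}\cap\cX_{_{{\rm sep}}}$, so $F\X$ is a legitimate object. On a morphism $[f]\colon\X\to\Y$ I would put $F[f]=[\hat{f}]$, where $\hat{f}\colon\hat{X}\to\hat{Y}$ is any extension of $\yoneda_{_{Y}}\cdot f$ along $\yoneda_{_{X}}$ (which exists since $\hat{Y}$ is injective).

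The well-definedness of $F$ on morphisms is the first point to verify, and it is where separatedness does the work. For $a\approx a'$, say $a=|\yoneda_{_{X}}|(x)$ and $a'=|\yoneda_{_{X}}|(x')$ with $x\equiv_{_{|X|}}x'$, a direct computation gives $|\hat{f}|(a)=|\yoneda_{_{Y}}|(|f|(x))$ for \emph{any} extension $\hat{f}$ (using that $\hat{f}\cdot\yoneda_{_{X}}=\yoneda_{_{Y}}\cdot f$, the equality holding because $\hat{Y}$ is separated), and likewise for $a'$; equivariance of $f$ then yields $|f|(x)\equiv_{_{|Y|}}|f|(x')$, whence $|\hat{f}|(a)\approx|\hat{f}'|(a')$. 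Thus $[\hat{f}]$ is equivariant and independent both of the chosen extension and of the representative of $[f]$, and functoriality follows from the same $\equiv_{_{\X\to\Y}}$-level bookkeeping. For $G$, given $\left\langle X,\equiv_{_{|X|}}\right\rangle$ with $X\in\cX_{_{{\rm inj}}}\cap\cX_{_{{\rm sep}}}$ and $\equiv_{_{|X|}}$ a partial equivalence relation, let $D=\{x\in|X|: x\equiv_{_{|X|}}x\}$ be its domain, let $X_{D}$ be the initial lift of $D\hookrightarrow|X|$, which is an embedding $X_{D}\to X$, and set $G\left\langle X,\equiv_{_{|X|}}\right\rangle=\left\langle X_{D},\equiv_{_{|X|}}\!\restriction_{D}\right\rangle$. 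Since $\cX_{_{{\rm sep}}}$ is closed under mono-sources, $X_{D}$ is separated, and $\equiv_{_{|X|}}\!\restriction_{D}$ is a genuine equivalence relation, so this lands in $\cX$-$\Equ_{_{{\rm sep}}}$; on morphisms, equivariance sends $D$ into the domain of the target, and initiality of the target embedding lifts the restricted underlying map back to $\cX$.

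For the unit $GF\cong\mathrm{id}$ the argument is clean: the domain of $\approx$ is the image of $|X|$, on which the initial structure inherited from $\hat{X}$ recovers $X$ precisely because $\yoneda_{_{X}}$ is an initial injection, so $GF\X\cong\X$, naturally in $\X$. The counit $FG\cong\mathrm{id}$ is the delicate direction, and I expect it to be the main obstacle. Starting from $\left\langle X,\equiv_{_{|X|}}\right\rangle$ with $X$ injective and separated, $G$ passes to the domain object $X_{D}$ and $F$ re-embeds $X_{D}$ into \emph{its own} injective hull $\widehat{X_{D}}$, which in general is \emph{not} isomorphic to $X$ in $\cX$, since {\bf (c)} provides enough injectives but no uniqueness of injective hulls. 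The resolution is that mutually inverse isomorphisms still exist in $\cX$-$\PEqu_{_{{\rm sep}}}$: using injectivity of $X$ and of $\widehat{X_{D}}$ one extends the two embeddings to morphisms $\phi\colon\widehat{X_{D}}\to X$ and $\psi\colon X\to\widehat{X_{D}}$ (the defining relations being honest equalities after the embeddings, by separatedness), and although $\phi\cdot\psi$ and $\psi\cdot\phi$ need not equal the identities in $\cX$, they agree with them after precomposition with the embeddings. Because each partial equivalence relation is supported exactly on the embedded domain and morphisms are taken modulo the coarse relation $\equiv_{_{\X\to\Y}}$, this is enough to force $[\phi]$ and $[\psi]$ to be mutually inverse in $\cX$-$\PEqu_{_{{\rm sep}}}$. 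Reconciling the non-uniqueness of injective hulls with the required isomorphism is precisely the hard part, and it succeeds only because the passage to equivalence classes of morphisms absorbs the ambiguity.
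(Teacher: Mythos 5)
Your proposal is correct, and it is built from exactly the same ingredients as the paper's proof --- the domain construction, the push-forward of the relation along $\yoneda_{_{X}}$, injectivity to extend morphisms, separatedness to turn $\simeq$ into honest equality --- but you decompose the equivalence differently, and the difference is not purely cosmetic. The paper defines only your $G$ (there called $R\colon\cX$-$\PEqu_{_{{\rm sep}}}\to\cX$-$\Equ_{_{{\rm sep}}}$) and proves it fully faithful and essentially surjective; your object-level construction $F\X=\left\langle\hat{X},\approx\right\rangle$ appears there only inside the essential-surjectivity step, where it never has to be made functorial. Your route therefore carries two burdens the paper's avoids: (i) well-definedness and functoriality of $F$ on morphisms, which, as you correctly observe, works only because morphisms are $\equiv_{_{\X\to\Y}}$-classes, so the non-canonical choice of extension $\hat{f}$ is absorbed by the quotient; and (ii) the counit $FG\cong\mathrm{id}$, where $\widehat{X_{D}}$ need not be isomorphic to $X$ in $\cX$. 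Your resolution of (ii) --- extending the two embeddings to $\phi\colon\widehat{X_{D}}\to X$ and $\psi\colon X\to\widehat{X_{D}}$ and checking that $[\phi]$ and $[\psi]$ are mutually inverse because each partial equivalence relation is supported exactly on the embedded domain --- is sound, and it is in substance the same injectivity argument the paper uses to prove fullness of $R$ (there: extending $i_{_{RY}}\cdot f$ along $i_{_{RX}}$). What each approach buys: the paper's is shorter, since fully-faithful-plus-essentially-surjective never requires the hull construction to respect composition; yours produces an explicit quasi-inverse and shows that passage to the injective hull becomes functorial after quotienting, which is genuinely extra information. The one loose end you should tie is naturality of the counit, which you assert only implicitly; it follows from the same supported-on-the-domain computation, since both $f\cdot\phi_{_{X}}$ and $\phi_{_{Y}}\cdot FG(f)$ agree with $f$ (up to $\equiv$) on the embedded domain.
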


\begin{prf}{\rm As in the proof of \cite[Theorem 3.12]{MR2072989}, a functor $R\colon\cX$-$\PEqu_{_{{\rm sep}}}\to\cX$-$\Equ_{_{{\rm sep}}}$ is defined taking each separated partial equilogical object $\X$ to $R\X=\left\langle RX,\equiv_{_{|RX|}}\right\rangle$, where $|RX|=\{x\in |X| \ | \ x\equiv_{_{|X|}}x\}$ and $\equiv_{_{|RX|}}$ is the restriction of $\equiv_{_{|X|}}$. For a morphism $[f]\colon\X\to\Y$, $|f|(|RX|)\subseteq|RY|$, so we take the (co)restriction $\overline{|f|}\colon|RX|\to|RY|$, lift to an $\cX$-morphism $\overline{f}\colon RX\to RY$ and set $R[f]=[\overline{f}]$.

To prove that $R$ is faithful one only needs to observe that, for elements $x,x'\in X$, if $x\equiv_{_{|X|}}x'$, then $x'\equiv_{_{|X|}}x$, and consequently $x\equiv_{_{|X|}}x$ and $x'\equiv_{_{|X|}}x'$, whence $x,x'\in|RX|$; and to prove that $R$ is full one uses the injectivity of $Y$, providing an extension $\hat{f}\colon X\to Y$ of $i_{_{RY}}\cdot f$ along $i_{_{RX}}$. 

Finally, for essential surjectivity let $\X=\left\langle X,\equiv_{_{|X|}}\right\rangle\in\cX$-$\Equ_{_{{\rm sep}}}$ and consider the embedding $\yoneda_{_{X}}\colon X\to\hat{X}$, $\hat{X}\in\cX_{_{{\rm sep,inj}}}$. Endow $|\hat{X}|$ with the following partial equivalence relation
\begin{align*}\varphi\equiv_{_{|\hat{X}|}}\psi\Longleftrightarrow \ \exists \ x,x'\in|X|; \ \varphi=|\yoneda_{_{X}}|(x),\psi=|\yoneda_{_{X}}|(x') \ \with \ x\equiv_{_{|X|}}x',\end{align*}
that is, two elements of $|\hat{X}|$ are related if, and only if, they are the images by $|\yoneda_{_{X}}|$ of elements that are related in $|X|$. The sets $|R\hat{X}|$ and $|X|$ are in bijection; using the $|$-$|$-initiality of $\yoneda_{_{X}}$ and $i_{_{R\hat{X}}}$, this bijection proves to be an isomorphism in $\cX$, and consequently in $\cX$-$\Equ_{_{{\rm sep}}}$, by the definition of $\equiv_{_{|\hat{X}|}}$, so $R\left\langle \hat{X},\equiv_{_{|\hat{X}|}}\right\rangle\cong\X$.}\end{prf}

For our next result we must assume that \\[0.4cm]
{\bf (d)} {\it every injective object of $\cX$ is exponentiable.} \\[0.4cm] 
Binary products and exponentials of injective objects are again injective, so $\cX_{_{{\rm inj}}}$ is a cartesian closed subcategory of $\cX$. We also assume that \\[0.4cm]
{\bf (e)} {\it the reflector from $\cX$ to $\cX_{_{{\rm sep}}}$ preserves finite products;} \\[0.4cm]
whence the exponential of separated objects, when it exists, is again separated \cite{MR0296126,MR785032}.

\begin{theorem}\label{teo22}$\cX$-$\PEqu_{_{{\rm sep}}}$ is cartesian closed.\end{theorem}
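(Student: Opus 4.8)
The plan is to construct the cartesian closed structure on $\cX$-$\PEqu_{_{{\rm sep}}}$ explicitly, transporting the cartesian closed structure of the subcategory $\cX_{_{{\rm inj}}}$ onto underlying objects and decorating them with suitable partial equivalence relations; this is exactly the strategy used for $\PEqu$ in \cite{MR2072989}, and conditions {\bf (a)}--{\bf (e)} are what make the ingredients of that proof available. Recall that by {\bf (d)} the subcategory $\cX_{_{{\rm inj}}}$ is closed under binary products and exponentials, and that by {\bf (e)} (together with closure of $\cX_{_{{\rm sep}}}$ under limits) these constructions keep separated objects separated. Thus I may freely form products and exponentials of separated injective objects inside $\cX$ and know that the results are again separated and injective.

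For the terminal object I would take $\top=\langle 1,\equiv_{_{|1|}}\rangle$, where $1$ is the terminal object of $\cX$: it is both separated and injective (its hom-sets are singletons, and extensions into it exist and are unique), its underlying set is a singleton $\{*\}$ since $|\text{-}|$ preserves limits, and I set $*\equiv_{_{|1|}}*$. Every object then has the constant map to $*$ as a representative, and any two equivariant maps into $\top$ are related, so the morphism set is a singleton. For binary products of $\X=\langle X,\equiv_{_{|X|}}\rangle$ and $\Y=\langle Y,\equiv_{_{|Y|}}\rangle$ I would take the product $X\times Y$ in $\cX$ --- separated and injective, with $|X\times Y|=|X|\times|Y|$ since $|\text{-}|$ preserves products --- equipped with the componentwise relation $(x,y)\equiv(x',y')$ iff $x\equiv_{_{|X|}}x'$ and $y\equiv_{_{|Y|}}y'$. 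The projections are equivariant, the pairing of two equivariant maps is equivariant and respects the morphism equivalence relations, and the universal property is then routine.

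The crux is the exponential. Given $\X,\Y\in\cX$-$\PEqu_{_{{\rm sep}}}$, I would set $\Y^{\X}=\langle Y^{X},\equiv_{_{|Y^{X}|}}\rangle$, where $Y^{X}$ is the exponential formed in $\cX_{_{{\rm inj}}}$ --- injective by {\bf (d)}, separated by {\bf (e)} --- and, writing $\mathrm{ev}$ for the underlying map of the evaluation morphism $Y^{X}\times X\to Y$ (so that $\mathrm{ev}\colon|Y^{X}|\times|X|\to|Y|$, again since $|\text{-}|$ preserves products), define the partial relation by
\[
\varphi\equiv_{_{|Y^{X}|}}\psi \ \Longleftrightarrow \ \forall\,x,x'\in|X|,\ \bigl(x\equiv_{_{|X|}}x' \ \Longrightarrow \ \mathrm{ev}(\varphi,x)\equiv_{_{|Y|}}\mathrm{ev}(\psi,x')\bigr).
\]
Symmetry and transitivity are inherited from $\equiv_{_{|Y|}}$; this relation is in general not reflexive, which is precisely why partial equivalence relations are needed here.

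Finally I would establish, naturally in $\Z$, a bijection
\[
\cX\text{-}\PEqu_{_{{\rm sep}}}(\Z\times\X,\Y)\ \cong\ \cX\text{-}\PEqu_{_{{\rm sep}}}(\Z,\Y^{\X})
\]
by transposing underlying $\cX_{_{{\rm inj}}}$-morphisms. The hard part will be checking that transposition is compatible with the partial-equivalence-relation layer. Concretely, for a representative $f\colon Z\times X\to Y$ one has $\mathrm{ev}(|\bar f|(z),x)=|f|(z,x)$ on underlying sets, so equivariance of the transpose $\bar f\colon Z\to Y^{X}$ --- that is, $z\equiv_{_{|Z|}}z'\Rightarrow|\bar f|(z)\equiv_{_{|Y^{X}|}}|\bar f|(z')$ --- unfolds, through the definition of $\equiv_{_{|Y^{X}|}}$, into exactly the equivariance of $f$ with respect to the componentwise relation on $Z\times X$; the converse direction (uncurrying an equivariant map into $\Y^{\X}$) is symmetric. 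One then verifies that $f\equiv_{_{\Z\times\X\to\Y}}g$ holds iff $\bar f\equiv_{_{\Z\to\Y^{\X}}}\bar g$, so that the bijection descends to equivalence classes and is natural in $\Z$. The genuinely delicate point throughout is that elements of $|Y^{X}|$ are abstract points of the exponential object rather than honest functions $|X|\to|Y|$, so every manipulation must be routed through the evaluation map and the defining equation of the transpose; here the separatedness of $Y^{X}$ (which upgrades the extension identities $\simeq$ to genuine equalities of transposes) and the fact that $|\text{-}|$ preserves the products underlying the cartesian closed structure of $\cX_{_{{\rm inj}}}$ are what legitimise these unfoldings.
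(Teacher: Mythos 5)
Your proposal is correct and follows essentially the same route as the paper: form the exponential $Y^{X}$ in $\cX_{_{{\rm sep,inj}}}$ (using \textbf{(d)} and \textbf{(e)}), equip $|Y^{X}|$ with the partial equivalence relation defined through the evaluation morphism, and verify the universal property by transposing and checking equivariance and uniqueness at the level of equivalence classes. The only differences are cosmetic --- you spell out the terminal object and binary products, which the paper takes as given, and your remark that separatedness is needed to turn $\simeq$ into equality of transposes is not actually used (the transpose comes from the genuine exponential adjunction, and indeed the paper notes the argument works for $\cX$-$\PEqu$ without separation).
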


\begin{proof}{\rm Let $\X=\left\langle X,\equiv_{_{|X|}}\right\rangle$ and $\Y=\left\langle Y,\equiv_{_{|Y|}}\right\rangle$ be partial equilogical separated objects. We build the exponential $Y^{X}$ in $\cX_{_{{\rm sep,inj}}}$ and endow $|Y^{X}|$ with the partial equivalence relation: $\alpha\equiv_{_{|Y^{X}|}}\beta$ if, and only if, for all $x,x'\in X$,
\begin{align*} x\equiv_{_{|X|}}x' \ \Longrightarrow \ \alpha(x)=|{\rm ev}|(\alpha,x)\equiv_{_{|Y|}}|{\rm ev}|(\beta,x')=\beta(x'),\end{align*}
for each $\alpha,\beta\in|Y^{X}|$, where ${\rm ev}\colon Y^{X}\times X\to Y$ is the evaluation morphism in $\cX$. Then $\Y^\X=\left\langle Y^{X},\equiv_{_{|Y^{X}|}}\right\rangle\in\cX$-$\PEqu_{_{{\rm sep}}}$ and $|{\rm ev}|\colon|Y^{X}|\times|X|\to|Y|$ is equivariant, so $[{\rm ev}]\colon\Y^\X\times\X\to\Y$ is a valid morphism in $\cX$-$\PEqu_{_{{\rm sep}}}$. More, $[{\rm ev}]$ satisfies the universal property: for each morphism $[f]\colon\Z\times\X\to\Y$, $\Z=\left\langle Z,\equiv_{_{|Z|}}\right\rangle\in\cX$-$\PEqu_{_{{\rm sep}}}$, there exists a unique $[\overline{f}]\colon\Z\to\Y^{\X}$ commuting the diagram below.
\begin{align*}\xymatrix{\Y^{\X}\times\X \ar[rrr]^{[{\rm ev}]} & & & \Y \\ & & & \\ \Z\times\X \ar[uurrr]_{[f]} \ar[uu]^{[\overline{f}]\times1_{_{\X}}} & & & }\end{align*}
The morphism $\overline{f}\colon Z\to Y^{X}$ is the transpose of $f\colon Z\times X\to Y$, so that 
\begin{align*}{\rm ev}\cdot(\overline{f}\times 1_{_{X}})=f,\end{align*} 
and $[\overline{f}]$ is indeed unique, for if $[f']\colon\Z\to\Y^{\X}$ is such that $[{\rm ev}\cdot(f'\times1_{_{X}})]=[f]$, then for each $z\equiv_{_{|Z|}}z'$ in $Z$ and $x\equiv_{_{|X|}}x'$ in $X$, 
\begin{align*}\begin{array}{rl}\overline{f}(z)(x)={\rm ev}\cdot(\overline{f}\times1_{_{X}})(z,x)=f(z,x)\equiv_{_{|Y|}}f(z',x') \hspace{2.5cm} \\ \hspace{2.5cm} \equiv_{_{|Y|}}{\rm ev}\cdot(f'\times1_{_{X}})(z',x')=f'(z')(x'),\end{array}\end{align*} 
hence $\overline{f}(z)\equiv_{_{|Y^{X}|}}f'(z')$, i.e. $[\overline{f}]=[f']$.}\end{proof}

Therefore, by Theorem \ref{teo21}, $\cX$-$\Equ_{_{{\rm sep}}}$ is cartesian closed. We remark that the proof of Theorem \ref{teo22} also applies to $\cX$-$\PEqu$ without separation.

To finish this section we discuss the presentation of equilogical objects as modest sets of assemblies, following what is done in \cite[Section 4]{MR2072989}.

\begin{definition}\label{def22}{\rm The category of {\it assemblies} $\Assm(\cX_{_{{\rm inj}}})$ over injective objects of $\cX$ consists of the following data: objects are triples $(A,X,E_{_{A}})$, where $A$ is a set, $X\in\cX_{_{{\rm inj}}}$, and $E_{_{A}}\colon A\to\mP|X|$ is a function such that $E_{_{A}}(a)\neq\emptyset$, for each $a\in A$, with $\mP|X|$ the powerset of $|X|$. The elements of $E_{_{A}}(a)$ are called {\it realizers} for $a$. A morphism between assemblies $(A,X,E_{_{A}})$ and $(B,Y,E_{_{B}})$ is a map $f\colon A\to B$ for which there exists a morphism $g\colon X\to Y$ in $\cX$ such that $|g|(E_{_{A}}(a))\subseteq E_{_{B}}(f(a))$; $g$ is said to be a {\it realizer} for $f$, and we say that $|g|$ {\it tracks} $f$.}\end{definition}

\begin{definition}\label{def23}{\rm An object $(A,X,E_{_{A}})\in\Assm(\cX_{_{{\rm inj}}})$ is called a {\it modest set} if, for all $a,a'\in A$, $a\neq a'$ implies $E_{_{A}}(a)\cap E_{_{A}}(a')=\emptyset$. The full subcategory of the assemblies that are modest sets is denoted by $\Mdst(\cX_{_{{\rm inj}}})$.}\end{definition}

With these definitions, we get the same properties as those for the particular case of topological spaces, which we highlight in the following items, omitting some of the proofs that follow directly from the ones in \cite{MR2072989}. \vspace{0.4cm}

\noindent(1) {\it $\Mdst(\cX_{_{{\rm inj}}})$ and $\Assm(\cX_{_{{\rm inj}}})$ have finite limits and the inclusion from modest sets to assemblies preserves them.} \vspace{0.4cm}

\noindent(2) {\it $\Mdst(\cX_{_{{\rm inj}}})$ and $\Assm(\cX_{_{{\rm inj}}})$ are cartesian closed and $\Mdst(\cX_{_{{\rm inj}}})\to\Assm(\cX_{_{{\rm inj}}})$ preserves exponentials.} For $(A,X,E_{_{A}})$ and $(B,Y,E_{_{B}})$ in $\Assm(\cX_{_{{\rm inj}}})$, the exponential is $(C,Y^{X},E_{_{C}})$, where $C=\{f\colon A\to B \ | \ \exists \ g\colon X\to Y\in\cX \ \text{realizer for} \ f\}$, and $E_{_{C}}(f)=\{\alpha\in|Y^{X}| \ | \ \text{$\alpha$ tracks $f$}\}$; here, for simplicity, we denote also by $\alpha$ the map from $|X|$ to $|Y|$, given by $x\mapsto|{\rm ev}|(\alpha,x)$, for each $x\in|X|$, where ${\rm ev}\colon Y^{X}\times X\to Y$ is the evaluation map. If $(B,Y,E_{_{B}})$ is a modest set, then so is $(C,Y^{X},E_{_{C}})$, for if $f,f'\colon A\to B$ are tracked by $\alpha\in|Y^{X}|$, then for each $a\in A$, take $x\in E_{_{A}}(a)\neq\emptyset$, then $\alpha(x)\in E_{_{B}}(f(a))\cap E_{_{B}}(f'(a))\neq\emptyset$, whence $f(a)=f'(a)$ and then $f=f'$. \vspace{0.4cm}

\noindent(3) {\it $\Mdst(\cX_{_{{\rm inj}}})$ is a reflective subcategory of $\Assm(\cX_{_{{\rm inj}}})$.} \vspace{0.4cm}

\noindent(4) {\it The regular subobjects of $(A,X,E_{_{A}})$ in $\Assm(\cX_{_{{\rm inj}}})$, or in $\Mdst(\cX_{_{{\rm inj}}})$, are in bijective correspondence with the powerset of $A$.} \vspace{0.4cm}

\noindent(5) {\it $\Mdst(\cX_{_{{\rm inj}}})$ and $\Assm(\cX_{_{{\rm inj}}})$ are regular categories.} 

\begin{theorem}\label{teo23}$\cX$-$\PEqu$ and $\Mdst(\cX_{_{{\rm inj}}})$ are equivalent.\end{theorem}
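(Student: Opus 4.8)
The plan is to construct a functor $F\colon\cX\text{-}\PEqu\to\Mdst(\cX_{_{{\rm inj}}})$ explicitly and exhibit a quasi-inverse, verifying that both composites are naturally isomorphic to the respective identities. The guiding idea is that a partial equivalence relation on $|X|$ records exactly the data of a modest set: the defined elements (those $x$ with $x\equiv_{_{|X|}}x$) assemble into equivalence classes, and each class is a ``point'' whose realizers are its members.

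**On objects,** given $\X=\langle X,\equiv_{_{|X|}}\rangle\in\cX\text{-}\PEqu$ with $X\in\cX_{_{{\rm inj}}}$, I would set $F\X=(A,X,E_{_{A}})$, where $A=\{[x] \mid x\in|X|,\ x\equiv_{_{|X|}}x\}$ is the set of $\equiv_{_{|X|}}$-equivalence classes of defined elements, and $E_{_{A}}([x])=\{x'\in|X| \mid x'\equiv_{_{|X|}}x\}=[x]$ itself. Each $E_{_{A}}([x])$ is nonempty by construction, and distinct classes are disjoint by the transitivity and symmetry of a partial equivalence relation, so $F\X$ is genuinely a modest set. On morphisms, an equivariant $f\colon X\to Y$ sends related elements to related elements, hence descends to a well-defined map $\bar f\colon A\to B$ on classes, and $f$ itself serves as a realizer, since $|f|(E_{_{A}}([x]))=|f|([x])\subseteq[|f|(x)]=E_{_{B}}(\bar f([x]))$; I would then check that $f\equiv_{_{\X\to\Y}}g$ forces $\bar f=\bar g$, so $F$ is well-defined on $\equiv_{_{\X\to\Y}}$-classes.

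**For the quasi-inverse** $G\colon\Mdst(\cX_{_{{\rm inj}}})\to\cX\text{-}\PEqu$, given $(A,X,E_{_{A}})$ I would equip $|X|$ with the partial equivalence relation $x\equiv_{_{|X|}}x'\Longleftrightarrow\exists\,a\in A$ with $x,x'\in E_{_{A}}(a)$; modesty guarantees that such an $a$ is unique, which makes this relation transitive, and symmetry is immediate, so $G(A,X,E_{_{A}})=\langle X,\equiv_{_{|X|}}\rangle$ is a valid partial equilogical object. A morphism of assemblies $f\colon A\to B$ tracked by some $g\colon X\to Y$ maps to the class $[g]$; the tracking condition $|g|(E_{_{A}}(a))\subseteq E_{_{B}}(f(a))$ is exactly what makes $|g|$ equivariant, and I would verify that the choice of tracker only affects $g$ up to $\equiv$, so $G$ lands correctly in $\cX\text{-}\PEqu$.

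**I expect the main obstacle** to be the two natural isomorphisms, particularly $GF\cong\mathrm{id}$ and $FG\cong\mathrm{id}$, where one must reconcile the abstract points of an assembly with the equivalence classes they induce. For $GF$ this is essentially tautological, since the partial equivalence relation reconstructed from $F\X$ returns $\equiv_{_{|X|}}$ on the defined elements. For $FG$ the subtlety is that the points of $FG(A,X,E_{_{A}})$ are $\equiv_{_{|X|}}$-classes rather than the original elements of $A$, so I must use modesty to produce the bijection $a\mapsto E_{_{A}}(a)$ between $A$ and the set of classes, and then check that the realizer assignments match on both sides and that the correspondence is natural in $(A,X,E_{_{A}})$. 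The cleanest route is probably to show that $F$ is full, faithful, and essentially surjective, thereby avoiding the explicit construction of $G$ as a strict functor; fullness and faithfulness follow from the morphism-level computations above, while essential surjectivity uses the class bijection together with the fact that $X$ already lies in $\cX_{_{{\rm inj}}}$.
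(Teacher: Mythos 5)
Your proposal is correct and is essentially the paper's own proof run in the opposite direction: the paper defines the functor from $\Mdst(\cX_{_{{\rm inj}}})$ to $\cX$-$\PEqu$ (your quasi-inverse $G$) and shows it is full, faithful, and essentially surjective, with your classes-of-defined-elements construction appearing there as the essential-surjectivity step. All the key verifications coincide --- modesty giving disjointness/uniqueness and hence transitivity of the induced partial equivalence relation, and the tracking condition matching equivariance --- so your concluding suggestion to avoid building $G$ explicitly and instead prove $F$ fully faithful and essentially surjective reproduces exactly the paper's argument, mirrored.
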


\begin{prf}{\rm Define the functor $F\colon\Mdst(\cX_{_{{\rm inj}}})\to\cX$-$\PEqu$ assigning to $(A,X,E_{_{A}})$ the object 
$\left\langle X,\equiv_{_{|X|}}\right\rangle$, where 
\begin{align*}x\equiv_{_{|X|}}x' \ \Longleftrightarrow \ \exists \ a\in A; \ x,x'\in E_{_{A}}(a),\end{align*} 
and on morphisms $F$ assigns to each $f\colon(A,X,E_{_{A}})\to(B,Y,E_{_{B}})$ the class of a realizer $g\colon X\to Y$ for $f$; $\equiv_{_{|X|}}$ is indeed an equivalence relation and two realizers for $f$ are in the same equivalence class, so $F$ is well-defined.

Faithfulness of $F$ follows from the observation in item (2) above: two maps tracked by the same realizer must be equal. To see that $F$ is full, take a morphism $[g]\colon F(A,X,E_{_{A}})\to F(B,Y,E_{_{B}})$ in $\cX$-$\PEqu$. For each $a\in A$, let $x\in E_{_{A}}(a)\neq\emptyset$; then $x\equiv_{_{|X|}}x$ and so $|g|(x)\equiv_{_{|Y|}}|g|(x)$, that is, there exists $b\in B$ such that $|g|(x)\in E_{_{B}}(b)$, whence we set $f(a)=b$; this $b$ is uniquely determined since $(B,Y,E_{_{B}})$ is a modest set, therefore we have a map $f\colon A\to B$, which, by definition, has $g$ as a realizer.

Now let $\left\langle X,\equiv_{_{|X|}}\right\rangle$ be a partial equilogical object and define $(A,X,E_{_{A}})$ by $A=\{x\in|X| \ | \ x\equiv_{_{|X|}}x\}/\hspace{-0.11cm}\equiv_{_{|X|}}$ and $E_{_{A}}([x])=[x]\subseteq\mP|X|$. Hence $F(A,X,E_{_{A}})=\left\langle X,\equiv_{_{|X|}}\right\rangle$ and $F$ is essentially surjective.}\end{prf}  

The same argument can be repeated replacing $\cX_{_{{\rm inj}}}$ with $\cX_{_{{\rm sep,inj}}}$, so we obtain $\Mdst(\cX_{_{{\rm sep,inj}}})\cong\cX$-$\PEqu_{_{{\rm sep}}}\cong\cX$-$\Equ_{_{{\rm sep}}}$. Properties from items (1) to (5) remain valid, so they also hold for $\cX$-$\Equ_{_{{\rm sep}}}$.   

\section{Equilogical objects and exact completion}\label{sec3}

The category $\Equ$ of equilogical spaces can also be obtained as a full reflective subcategory of the exact completion \cite{MR1659606,MR1358759,MR678508} $\Top_{_{{\rm ex}}}$ of the category of topological spaces \cite{MR1721095}, and this is a particular instance of a general process to obtain such categories \cite{MR1659606}.

We can describe the exact completion $\cX_{_{{\rm ex}}}$ of $\cX$ as: objects are
{\it pseudo-equivalence relations} on $\cX$, that is, parallel pairs of morphisms $\xymatrix{X_{_{1}} \ar@<0.7ex>[r]^{r_{_{1}}} \ar@<-0.7ex>[r]_{r_{_{2}}} & X_{_{0}}}$ of $\cX$ satisfying \\
(i) {\it reflexivity}: there exists a morphism $r\colon X_{_{0}}\to X_{_{1}}$ such that $r_{_{1}}\cdot r=1_{_{X_{_{0}}}}=r_{_{2}}\cdot r$;
\begin{align*}\xymatrix{ & X_{_{1}} \ar[dl]_{r_{_{1}}} \ar[dr]^{r_{_{2}}} & \\ X_{_{0}} & X_{_{0}} \ar[l]^{1_{_{X_{0}}}} \ar[r]_{1_{_{X_{0}}}} \ar@{-->}[u]_(0.4){r} & X_{_{0}}}\end{align*}
(ii) {\it symmetry}: there exists a morphism $s\colon X_{_{1}}\to X_{_{1}}$ such that $r_{_{1}}\cdot s=r_{_{2}}$ and $r_{_{2}}\cdot s=r_{_{1}}$;  
\begin{align*}\xymatrix{ & X_{_{1}} \ar[dl]_{r_{_{1}}} \ar[dr]^{r_{_{2}}} & \\ X_{_{0}} & X_{_{1}} \ar[l]^{r_{_{2}}} \ar[r]_{r_{_{1}}} \ar@{-->}[u]_(0.4){s} & X_{_{0}}}\end{align*}
(iii) {\it transitivity}: for $r_{_{3}},r_{_{4}}\colon X_{_{2}}\to X_{_{1}}$ a pullback of $r_{_{1}}$,$r_{_{2}}$, there exists a morphism $t\colon X_{_{2}}\to X_{_{1}}$ commuting the following diagram  
\begin{align*}\xymatrix{ & & X_{_{1}} \ar@/_/[dddll]_{r_{_{1}}} \ar@/^/[dddrr]^{r_{_{2}}} & & \\ & & X_{_{2}} \ar[dl]_{r_{_{3}}} \ar[dr]^{r_{_{4}}} \ar@{-->}[u]_(0.45){t} \ar@<-1.5ex>@{}[dd]^(0.25)[@!-45]{\text{\pigpenfont J}} & & \\ & X_{_{1}} \ar[dl]^{r_{_{1}}} \ar[dr]_{r_{_{2}}} & & X_{_{1}} \ar[dl]^{r_{_{1}}} \ar[dr]_{r_{_{2}}} & \\ X_{_{0}} & & X_{_{0}} & & X_{_{0}}.}\end{align*}
A morphism from $\xymatrix{X_{_{1}} \ar@<0.7ex>[r]^{r_{_{1}}} \ar@<-0.7ex>[r]_{r_{_{2}}} & X_{_{0}}}$ to $\xymatrix{Y_{_{1}} \ar@<0.7ex>[r]^{s_{_{1}}} \ar@<-0.7ex>[r]_{s_{_{2}}} & Y_{_{0}}}$ is an equivalence classe $[f]$ of an $\cX$-morphism $f\colon X_{_{0}}\to Y_{_{0}}$ such that there exists $g\colon X_{_{1}}\to Y_{_{1}}$ in $\cX$ satisfying $f\cdot r_{_{i}}=s_{_{i}}\cdot g$, $i=1,2$.
\begin{align*}\xymatrix{X_{_{1}} \ar@<0.7ex>[d]^{r_{_{2}}} \ar@<-0.7ex>[d]_{r_{_{1}}} \ar@{-->}[r]^{g} & Y_{_{1}} \ar@<0.7ex>[d]^{s_{_{2}}} \ar@<-0.7ex>[d]_{s_{_{1}}} \\ X_{_{0}} \ar[r]_{f} & Y_{_{0}}.}\end{align*}
Here two morphisms $f_{_{1}},f_{_{2}}\colon X_{_{0}}\to Y_{_{0}}$ are related if, and only if, there exists a morphism $h\colon X_{_{0}}\to Y_{_{1}}$ such that $f_{_{i}}=s_{_{i}}\cdot h$, $i=1,2$.
\begin{align*}\xymatrix{X_{_{1}} \ar@<0.7ex>[dd]^{r_{_{2}}} \ar@<-0.7ex>[dd]_{r_{_{1}}} & & Y_{_{1}} \ar@<0.7ex>[dd]^{s_{_{2}}} \ar@<-0.7ex>[dd]_{s_{_{1}}} \\ & & \\ X_{_{0}} \ar@<0.7ex>[rr]^{f_{_{1}}} \ar@<-0.7ex>[rr]_{f_{_{2}}} \ar@{-->}[uurr]^{h} & & Y_{_{0}}}\end{align*}

Since it is topological over $\Set$, $\cX$ has a stable factorization system for morphisms given by $({\rm Epi},{\rm RegMono})$ \cite[Remark 15.2(3), Proposition 21.14]{MR1051419} (see also \cite{CHRCCECT}). Let us denote by ${\rm PER}(\cX,{\rm RegMono})$ the full subcategory of $\cX_{_{{\rm ex}}}$ of the pseudo-equivalence relations $\xymatrix{X_{_{1}} \ar@<0.7ex>[r]^{r_{_{1}}} \ar@<-0.7ex>[r]_{r_{_{2}}} & X_{_{0}}}$ such that $\left\langle r_{_{1}},r_{_{2}}\right\rangle\colon X_{_{1}}\to X_{_{0}}\times X_{_{0}}$ is a regular monomorphism.

\begin{lemma}\label{lem30}$\cX$-$\Equ$ and ${\rm PER}(\cX,{\rm RegMono})$ are equivalent.\end{lemma}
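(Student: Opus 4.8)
The plan is to exhibit a fully faithful, essentially surjective functor $\Phi\colon\cX$-$\Equ\to{\rm PER}(\cX,{\rm RegMono})$ whose value on objects turns an equivalence relation into the pseudo-equivalence relation that carries it.

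First I would define $\Phi$ on objects. Given $\X=\langle X,\equiv_{_{|X|}}\rangle$, regard $\equiv_{_{|X|}}$ as a subset $R_{_{X}}\subseteq|X|\times|X|=|X\times X|$, using that the topological functor $|$-$|$ preserves products. Since $|$-$|$ is topological, take the $|$-$|$-initial lift of the inclusion $R_{_{X}}\hookrightarrow|X\times X|$: this yields an object $X_{_{1}}$ with $|X_{_{1}}|=R_{_{X}}$ and an initial morphism $\langle r_{_{1}},r_{_{2}}\rangle\colon X_{_{1}}\to X\times X$; set $\Phi\X=(X_{_{1}}\rightrightarrows X)$ with $r_{_{1}},r_{_{2}}$ the composites with the two projections. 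As an initial morphism with injective underlying map, $\langle r_{_{1}},r_{_{2}}\rangle$ is a regular monomorphism, so $\Phi\X$ lies in ${\rm PER}(\cX,{\rm RegMono})$ once we check it is a pseudo-equivalence relation. The required morphisms $r,s,t$ are all obtained by initiality: the diagonal $X\to X\times X$, the twisted pairing $\langle r_{_{2}},r_{_{1}}\rangle$, and --- using that $|$-$|$ preserves the pullback defining $X_{_{2}}$ --- the ``composition'' map $X_{_{2}}\to X\times X$ have underlying functions valued in $R_{_{X}}$ precisely by reflexivity, symmetry and transitivity of $\equiv_{_{|X|}}$, hence factor uniquely through the initial $\langle r_{_{1}},r_{_{2}}\rangle$.

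Next I would treat morphisms. For $[f]\colon\X\to\Y$ I set $\Phi[f]=[f]$: equivariance of $|f|$ makes the underlying function of the composite $X_{_{1}}\to X\times X\to Y\times Y$ land in $R_{_{Y}}$, so by initiality of $Y_{_{1}}$ it factors through a realizer $g\colon X_{_{1}}\to Y_{_{1}}$, producing a genuine $\cX_{_{{\rm ex}}}$-morphism; functoriality is inherited from $\cX$. The heart of faithfulness and fullness is to match the two identifications of underlying morphisms. Unwinding the definition in $\cX_{_{{\rm ex}}}$, two maps $f_{_{1}},f_{_{2}}\colon X\to Y$ represent the same arrow $\Phi\X\to\Phi\Y$ iff $\langle f_{_{1}},f_{_{2}}\rangle$ factors through $Y_{_{1}}$, i.e. iff $|f_{_{1}}|(x)\equiv_{_{|Y|}}|f_{_{2}}|(x)$ for every $x\in|X|$. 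Using reflexivity of $\equiv_{_{|X|}}$ and equivariance, this coincides with $f_{_{1}}\equiv_{_{\X\to\Y}}f_{_{2}}$: the forward implication takes $x'=x$, and the backward one chains $|f_{_{1}}|(x)\equiv_{_{|Y|}}|f_{_{2}}|(x)\equiv_{_{|Y|}}|f_{_{2}}|(x')$. Faithfulness is immediate, and fullness follows because a representative admitting a realizer $g$ must carry $R_{_{X}}$ into $R_{_{Y}}$, hence is already equivariant and defines a morphism of $\cX$-$\Equ$.

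Finally, for essential surjectivity take $(X_{_{1}}\rightrightarrows X_{_{0}})$ in ${\rm PER}(\cX,{\rm RegMono})$. The hypothesis that $\langle r_{_{1}},r_{_{2}}\rangle$ is a regular monomorphism means it is an embedding, so $|X_{_{1}}|$ may be identified with a subset $R\subseteq|X_{_{0}}|\times|X_{_{0}}|$ and $X_{_{1}}$ is the initial lift of $R\hookrightarrow|X_{_{0}}\times X_{_{0}}|$. Reading the pseudo-equivalence structure through $|$-$|$ (again preserving the transitivity pullback) shows $R$ is reflexive, symmetric and transitive, so $\langle X_{_{0}},R\rangle\in\cX$-$\Equ$; by uniqueness of initial lifts $\Phi\langle X_{_{0}},R\rangle$ has underlying object $X_{_{1}}$ over $X_{_{0}}\times X_{_{0}}$, giving an isomorphism $\Phi\langle X_{_{0}},R\rangle\cong(X_{_{1}}\rightrightarrows X_{_{0}})$ already in $\cX$, a fortiori in $\cX_{_{{\rm ex}}}$. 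I expect the main obstacle to be the reconciliation of morphism-identifications in the previous paragraph: the $\cX$-$\Equ$ relation is quantified over all related pairs $x\equiv_{_{|X|}}x'$, while the $\cX_{_{{\rm ex}}}$ relation only compares representatives on equal arguments, and it is exactly the reflexivity of the (total) equivalence relations together with equivariance that forces the two to agree.
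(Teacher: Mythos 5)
Your proposal is correct and follows essentially the same route as the paper: both construct the functor by taking the $|$-$|$-initial lift of the relation $E_{_{X}}\subseteq|X|\times|X|$ (your lift along the inclusion into $|X\times X|$ and the paper's initial lift of the source of the two projections yield the same structure, by composability of initial sources), act as the identity on representatives of morphisms, and obtain essential surjectivity by reading an equivalence relation off a pseudo-equivalence relation whose pairing is a regular monomorphism. Your reconciliation of the two morphism-identifications via reflexivity, equivariance and transitivity is exactly the point the paper leaves implicit in its claim that the functor is fully faithful, so your write-up is a faithful, more detailed rendering of the paper's argument.
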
 

\begin{prf}{\rm For each equilogical object $\left\langle X,\equiv_{_{|X|}}\right\rangle$, consider $E_{_{X}}=\{(x,x')\in|X|\times|X| \ | \ x\equiv_{_{|X|}}x'\}$ and the source $(\pi_{_{i}}^{X}\colon E_{_{X}}\to|X|)_{_{i=1,2}}$ of the projections from $E_{_{X}}$ onto $|X|$. Take its $|$-$|$-initial lifting, which by abuse of notation we denote by $(\pi_{_{i}}^{X}\colon E_{_{X}}\to X)_{_{i=1,2}}$. Hence $\xymatrix{E_{_{X}} \ar@<0.7ex>[r]^{\pi_{_{1}}^{X}} \ar@<-0.7ex>[r]_{\pi_{_{2}}^{X}} & X}$ belongs to ${\rm PER}(\cX,{\rm RegMono})$ and each morphism $[f]\colon\X\to\Y$ in $\cX$-$\Equ$ is a valid morphism 
\begin{align*}[f]\colon(E_{_{X}},X,\pi_{_{1}}^{X},\pi_{_{2}}^{X})\to(E_{_{Y}},Y,\pi_{_{1}}^{Y},\pi_{_{2}}^{Y})\end{align*} 
in ${\rm PER}(\cX,{\rm RegMono})$.

That correspondence defines a functor which is fully faithful and, for a pseudo-equivalence relation $\xymatrix{X_{_{1}} \ar@<0.7ex>[r]^{r_{_{1}}} \ar@<-0.7ex>[r]_{r_{_{2}}} & X_{_{0}}}$ in ${\rm PER}(\cX,{\rm RegMono})$, define the equilogical object $\left\langle X_{_{0}},\equiv_{_{|X_{_{0}}|}}\right\rangle$ by 
\begin{align*}x_{_{0}}\equiv_{_{|X_{_{0}}|}}x_{_{0}}' \ \Longleftrightarrow \  (\exists \ \text{(unique)} \ x_{_{1}}\in X_{_{1}}) \ |r_{_{1}}|(x_{_{1}})=x_{_{0}} \ \with \ |r_{_{2}}|(x_{_{1}})=x_{_{0}}',\end{align*} 
for each $x_{_{0}},x_{_{0}}'\in X_{_{0}}$. Then $\xymatrix{E_{_{X_{_{0}}}} \ar@<0.7ex>[r]^{\pi_{_{1}}^{X_{_{0}}}} \ar@<-0.7ex>[r]_{\pi_{_{2}}^{X_{_{0}}}} & X_{_{0}}}$ is isomorphic to $\xymatrix{X_{_{1}} \ar@<0.7ex>[r]^{r_{_{1}}} \ar@<-0.7ex>[r]_{r_{_{2}}} & X_{_{0}}}$ in ${\rm PER}(\cX,{\rm RegMono})$ and the functor is essentially surjective.}\end{prf}

Hence \cite[Theorem 4.3]{MR1659606} states that 
\begin{theorem}\label{teo31}$\cX$-$\Equ\cong{\rm PER}(\cX,{\rm RegMono})$ is a full reflective subcategory of $\cX_{_{{\rm ex}}}$; the reflector preserves finite products and commutes with change of base in the codomain.\end{theorem}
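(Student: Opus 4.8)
The plan is to exhibit the statement as an instance of the general reflection theorem for exact completions and then to record the explicit reflector. The first task is to check that the cited result applies in our situation. As noted just before Lemma \ref{lem30}, since $\cX$ is topological over $\Set$ it carries a pullback-stable, proper factorization system $({\rm Epi},{\rm RegMono})$ for morphisms; equivalently, ${\rm RegMono}$ is a pullback-stable class of monomorphisms, closed under composition and containing the isomorphisms. These are exactly the hypotheses under which \cite[Theorem 4.3]{MR1659606} produces a reflection of $\cX_{_{{\rm ex}}}$ onto its full subcategory of ${\rm RegMono}$-equivalence relations, with reflector preserving finite products and commuting with change of base in the codomain. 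Since that subcategory is precisely ${\rm PER}(\cX,{\rm RegMono})$, and $\cX$-$\Equ\cong{\rm PER}(\cX,{\rm RegMono})$ by Lemma \ref{lem30}, the theorem follows.

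To make the reflector explicit I would describe it on objects as follows. Given a pseudo-equivalence relation with legs $r_1,r_2\colon X_1\to X_0$, factor the induced morphism $\langle r_1,r_2\rangle\colon X_1\to X_0\times X_0$ as $m\cdot e$, with $e$ an epimorphism and $m=\langle\rho_1,\rho_2\rangle$ a regular monomorphism. The reflection is then the parallel pair $\rho_1,\rho_2\colon R\to X_0$, and the unit is the $\cX_{_{{\rm ex}}}$-morphism represented by $1_{_{X_0}}$, witnessed on the domain component by $e$.

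Next I would verify that this parallel pair lies in ${\rm PER}(\cX,{\rm RegMono})$ and enjoys the universal property. Reflexivity, symmetry and transitivity of $\langle\rho_1,\rho_2\rangle$ are inherited from the corresponding witnesses $r,s,t$ of the original pseudo-equivalence relation by transporting them across the factorization: the diagonal $\langle 1_{_{X_0}},1_{_{X_0}}\rangle$ factors through $\langle r_1,r_2\rangle$ hence through $m$, giving reflexivity; the swap isomorphism of $X_0\times X_0$ identifies the images of $\langle r_1,r_2\rangle$ and $\langle r_2,r_1\rangle$, which therefore coincide, giving symmetry; and transitivity follows by comparing the image of the composable pair with $m$. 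For the universal property, any morphism from $(r_1,r_2)$ to an object of ${\rm PER}(\cX,{\rm RegMono})$ lifts uniquely through the reflection by the diagonal fill-in of the factorization system, and well-definedness on $\cX_{_{{\rm ex}}}$-classes uses that $e$ is epic.

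Finally, preservation of finite products and commutation with change of base in the codomain both reduce to pullback-stability of $({\rm Epi},{\rm RegMono})$: products in $\cX_{_{{\rm ex}}}$ are computed componentwise and pullbacks along codomain morphisms likewise, so stability guarantees that factoring first and then forming the product (respectively the pullback) agrees with forming the product (respectively the pullback) first and then factoring. I expect the transitivity clause to be the main obstacle, as it is the one place where stability of the factorization system, and not merely its properness, is genuinely used; reflexivity, symmetry and the universal property are comparatively formal.
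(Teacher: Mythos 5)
Your proposal is correct and takes essentially the same route as the paper: the paper's proof of this theorem consists precisely of noting that $({\rm Epi},{\rm RegMono})$ is a stable factorization system on the topological category $\cX$ and then invoking \cite[Theorem 4.3]{MR1659606} together with Lemma \ref{lem30}, so your explicit construction of the reflector (image factorization of $\left\langle r_{_{1}},r_{_{2}}\right\rangle$, unit represented by $1_{_{X_{_{0}}}}$, diagonal fill-ins for the universal property) merely unfolds the cited result. One small inaccuracy worth noting: your ``equivalently'' misidentifies the hypothesis, since stability of a factorization system means pullback-stability of the \emph{left} class ${\rm Epi}$ (the right class is pullback-stable in any orthogonal factorization system), and the closure properties of ${\rm RegMono}$ you list do not by themselves guarantee that factorizations exist --- but as you also state the correct hypothesis, this does not affect the argument.
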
 

Next we wish to prove that $\cX$-$\Equ$ is cartesian closed, so by Theorem \ref{teo31} and \cite[Theorem 1.2]{MR785032}, it suffices to show that $\cX_{_{{\rm ex}}}$ is cartesian closed. To do so, we will apply the following result derived from \cite[Theorem 1, Lemma 4]{MR1721095} (see also \cite[Theorem 1.1]{CHRCCECT}).

\begin{theorem}\label{teo1}Let $\cX$ be a complete, infinitely extensive and well-powered category with factorizations $({\rm RegEpi},{\rm Mono})$ such that $f\times1$ is an epimorphism whenever $f$ is a regular epimorphism. Then $\cX_{_{{\rm ex}}}$ is cartesian closed provided $\cX$ is weakly cartesian closed.\end{theorem}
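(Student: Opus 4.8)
The plan is to establish cartesian closedness of $\cX_{_{{\rm ex}}}$ by exhibiting all exponentials, exploiting that $\cX$ sits inside $\cX_{_{{\rm ex}}}$ as the (regular) projective objects and that every object of $\cX_{_{{\rm ex}}}$ is a regular quotient of such a projective. Finite products pose no difficulty: $\cX_{_{{\rm ex}}}$ is the exact completion of a finitely complete category, so it is finitely complete, and products of pseudo-equivalence relations are computed componentwise. Hence the whole content lies in producing exponentials $B^{A}$, and I would organize this in two stages, following the structure of \cite[Theorem 1, Lemma 4]{MR1721095}.

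First I would treat the case of a projective exponent, i.e.\ $A=P$ where $P$ is the image of some $X_{0}\in\cX$ (the object $X_{0}$ with its discrete, equality relation). This is the heart of the argument and the only place where weak cartesian closedness of $\cX$ enters. Writing $B=(Y_{1}\rightrightarrows Y_{0})$, I would start from a weak exponential $e\colon W\times X_{0}\to Y_{0}$ in $\cX$ together with the analogous weak data for $Y_{1}$, and then carve out of $W$ a pseudo-equivalence relation whose total space parameterizes pairs of maps $X_{0}\to Y_{0}$ that are pointwise $B$-related, the witness being a map $X_{0}\to Y_{1}$; concretely this relation is obtained as a finite weak limit in $\cX$ and then passed through its $({\rm RegEpi},{\rm Mono})$ factorization so that it lands in ${\rm PER}(\cX,{\rm RegMono})$. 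The resulting object $B^{P}$ of $\cX_{_{{\rm ex}}}$ is then checked to satisfy the exponential adjunction $\cX_{_{{\rm ex}}}(C\times P,B)\cong\cX_{_{{\rm ex}}}(C,B^{P})$: the weak universal property of $e$ supplies transposes, while passing to $\cX_{_{{\rm ex}}}$, where the quotient by the relation is taken, forces these transposes to be well-defined and unique.

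Second, for an arbitrary exponent $A$ I would take a projective presentation $P_{1}\rightrightarrows P_{0}\twoheadrightarrow A$ in $\cX_{_{{\rm ex}}}$ with $P_{0},P_{1}$ projective, and define $B^{A}$ as the equalizer in $\cX_{_{{\rm ex}}}$ of the two maps $B^{P_{0}}\rightrightarrows B^{P_{1}}$ induced by $P_{1}\rightrightarrows P_{0}$. Since $B^{P_{0}}$ and $B^{P_{1}}$ exist by the first stage and $\cX_{_{{\rm ex}}}$ has equalizers, this defines a candidate, and its universal property follows by computing $\cX_{_{{\rm ex}}}(C,-)$ against the defining equalizer, using that $A$ is the coequalizer of its presentation and that $C\times(-)$ preserves that coequalizer — which holds because $\cX_{_{{\rm ex}}}$ is exact. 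This reduces general exponentials to the projective case and completes the construction.

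The main obstacle is the first stage: upgrading the merely weak, non-unique transposition available in $\cX$ to a genuine adjunction in $\cX_{_{{\rm ex}}}$. This is precisely where the remaining hypotheses are spent — infinite extensivity and well-poweredness to assemble the relevant object of ``realizable'' maps while keeping the limit/colimit small, and the requirement that $f\times1$ be an epimorphism whenever $f$ is a regular epimorphism to guarantee that forming products is compatible with the regular-epi quotients entering the construction, so that the adjunction isomorphism is natural in $C$. Granting the cited \cite[Lemma 4]{MR1721095}, which carries out exactly this strictification, the two stages assemble into a proof that $\cX_{_{{\rm ex}}}$ is cartesian closed.
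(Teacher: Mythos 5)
First, a point of comparison: the paper does not prove this theorem at all --- it is imported from Rosick\'y \cite{MR1721095} (Theorem 1 together with Lemma 4; see also \cite[Theorem 1.1]{CHRCCECT}) and then applied. So your proposal can only be judged against the cited argument and against what a complete proof requires. Its skeleton is the standard route and is partly sound: finite products of pseudo-equivalence relations are computed componentwise; the objects of $\cX$, with their discrete relations, are regular projectives of which every object of $\cX_{_{{\rm ex}}}$ is a regular quotient; and your second stage is correct as stated, because in the exact category $\cX_{_{{\rm ex}}}$ the functor $C\times(-)$ carries a presentation $P_{1}\rightrightarrows P_{0}\to A$ to a coequalizer again (quotients are pullback-stable), so applying hom-functors gives $\cX_{_{{\rm ex}}}(C,{\rm eq}(B^{P_{0}}\rightrightarrows B^{P_{1}}))\cong\cX_{_{{\rm ex}}}(C\times A,B)$ naturally in $C$.

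The genuine gap is in your first stage, which carries all the content of the theorem. You do not prove it: the step in which weak cartesian closedness, extensivity, well-poweredness and the hypothesis on $f\times 1$ are turned into a strict universal property is delegated wholesale to ``the cited Lemma 4'', i.e.\ to the very source from which the paper quotes the theorem, and your description of what that lemma does is a guess rather than an argument. Worse, the sketch you do give contains a step that fails: you pass the relation defining $B^{P}$ through its $({\rm RegEpi},{\rm Mono})$-factorization ``so that it lands in ${\rm PER}(\cX,{\rm RegMono})$''. This is both unnecessary and harmful. The exponential in $\cX_{_{{\rm ex}}}$ need not be a monic pseudo-equivalence relation, and in $\cX_{_{{\rm ex}}}$ --- unlike in $\Set$, where epimorphisms split --- replacing a pseudo-equivalence relation by its monic image changes the object up to isomorphism in general; this is precisely why ${\rm PER}(\cX,{\rm RegMono})\cong\cX$-$\Equ$ is a \emph{proper} reflective subcategory of $\cX_{_{{\rm ex}}}$ (Theorem \ref{teo31}) rather than all of it. Concretely, writing $C=(C_{1}\rightrightarrows C_{0})$ and $P$ for $X_{0}$ with its discrete relation, a morphism $C\times P\to B=(Y_{1}\rightrightarrows Y_{0})$ is a map $C_{0}\times X_{0}\to Y_{0}$ together with a lift $C_{1}\times X_{0}\to Y_{1}$; when you transpose a map $C\to B^{P}$ you must manufacture that lift from the data of $B^{P}$, and if the total space of the relation on $B^{P}$ is the image --- pairs of pointwise related maps with the witnesses discarded --- no witness $X_{0}\to Y_{1}$ can be recovered, since the regular epimorphism from the witness object onto its image need not split. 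So the transposition, hence the adjunction bijection, breaks down. The witnesses must remain part of the exponential, and showing that the resulting object has a strict (not merely weak) universal property is exactly where the listed hypotheses have to be spent --- the part of the proof that your proposal leaves to the citation.
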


Since $\cX$ is topological over $\Set$, in order to use the latter theorem, we will assume that \\[0.4cm]
{\bf (f)} {\it $\cX$ is infinitely extensive;} \\[0.4cm]
more, assuming also conditions {\bf (a)} to {\bf (e)} from the previous section, following the same steps of the proofs of \cite[Theorems 5.3 and 5.5]{CHRCCECT}, we deduce the following result.

\begin{proposition}\label{prop302}$\cX$ is weakly cartesian closed.\end{proposition}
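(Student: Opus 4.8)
The plan is to unwind the definition of weak cartesian closedness and reduce the statement to constructing, for every pair of objects $X,Y\in\cX$, a \emph{weak exponential}: an object $Y^{X}$ together with an evaluation morphism $\mathrm{ev}\colon Y^{X}\times X\to Y$ such that every $f\colon Z\times X\to Y$ factors as $f=\mathrm{ev}\cdot(\overline{f}\times 1_{_{X}})$ for at least one (not necessarily unique) $\overline{f}\colon Z\to Y^{X}$. These weak exponentials are to be extracted from genuine exponentials of injective objects, exactly along the lines of the proofs of \cite[Theorems 5.3 and 5.5]{CHRCCECT}.

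First I would fix $|\text{-}|$-initial morphisms $\yoneda_{_{X}}\colon X\to\hat{X}$ and $\yoneda_{_{Y}}\colon Y\to\hat{Y}$ into injective objects, as granted by \textbf{(c)}. Since $\hat{X}$ is injective it is exponentiable by \textbf{(d)}, so the genuine exponential $\hat{Y}^{\hat{X}}$ exists in $\cX$, with evaluation $\mathrm{ev}_{_{\hat{Y}^{\hat{X}}}}\colon\hat{Y}^{\hat{X}}\times\hat{X}\to\hat{Y}$. Writing $\mathrm{ev}'=\mathrm{ev}_{_{\hat{Y}^{\hat{X}}}}\cdot(1\times\yoneda_{_{X}})\colon\hat{Y}^{\hat{X}}\times X\to\hat{Y}$, I would define $Y^{X}$ as the subobject of $\hat{Y}^{\hat{X}}$ carried by the initial structure on the set of those $\alpha$ whose slice $x\mapsto|\mathrm{ev}'|(\alpha,x)$ takes values in $|\yoneda_{_{Y}}|(|Y|)$. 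Because $\yoneda_{_{Y}}$ is $|\text{-}|$-initial, any set-theoretic lift through $|\yoneda_{_{Y}}|$ of the restriction of $\mathrm{ev}'$ to $Y^{X}\times X$ (whose image lies in $|\yoneda_{_{Y}}|(|Y|)$ by construction) is automatically an $\cX$-morphism; choosing one yields the evaluation $\mathrm{ev}\colon Y^{X}\times X\to Y$.

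For the weak universal property, given $f\colon Z\times X\to Y$ I would form $\yoneda_{_{Y}}\cdot f\colon Z\times X\to\hat{Y}$ and extend it along $1_{_{Z}}\times\yoneda_{_{X}}$ using the injectivity of $\hat{Y}$, obtaining $g\colon Z\times\hat{X}\to\hat{Y}$ with $g\cdot(1\times\yoneda_{_{X}})\simeq\yoneda_{_{Y}}\cdot f$. Transposing $g$ across the adjunction $(-)\times\hat{X}\dashv(-)^{\hat{X}}$ yields $\overline{g}\colon Z\to\hat{Y}^{\hat{X}}$ satisfying $\mathrm{ev}'\cdot(\overline{g}\times 1_{_{X}})=g\cdot(1\times\yoneda_{_{X}})\simeq\yoneda_{_{Y}}\cdot f$; corestricting $\overline{g}$ along the subobject inclusion, once its slices are confirmed to land in $|\yoneda_{_{Y}}|(|Y|)$, produces the candidate transpose $\overline{f}\colon Z\to Y^{X}$.

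The routine points are quickly dispatched: the extension step needs $1_{_{Z}}\times\yoneda_{_{X}}$ to be $|\text{-}|$-initial, which holds in any topological category since $|\text{-}|$-initial morphisms are stable under finite products, and the non-uniqueness of $\overline{f}$ is exactly what makes the notion weak and causes no trouble. The genuine obstacle is that pseudo-injectivity delivers the extension $g$, hence the entire factorization and even the condition that the slices of $\overline{g}$ land in $Y$, only up to the pre-order $\simeq$. When $Y$ is separated this dissolves: by \textbf{(c)} the hull $\hat{Y}$ is separated, by \textbf{(b)} the initial map $\yoneda_{_{Y}}$ is then an embedding, so $\simeq$ collapses to equality and $\yoneda_{_{Y}}$ may be cancelled, giving the strict identity $f=\mathrm{ev}\cdot(\overline{f}\times 1_{_{X}})$. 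Reducing the general $Y$ to this separated case, transporting the closed structure along the product-preserving reflector of \textbf{(e)}, is the delicate part of the argument and is carried out exactly as in \cite[Theorem 5.5]{CHRCCECT}.
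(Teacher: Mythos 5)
Your construction is, up to presentation, the one the paper has in mind when it invokes \cite[Theorems 5.3 and 5.5]{CHRCCECT}: form the genuine exponential $\hat{Y}^{\hat{X}}$ using \textbf{(c)} and \textbf{(d)}, cut out the initial subobject of those $\alpha$ whose slices land in the image of $|\yoneda_{Y}|$, obtain the evaluation by lifting through the $|$-$|$-initial $\yoneda_{Y}$, and get the weak factorization by extending $\yoneda_{Y}\cdot f$ along the initial morphism $1_{Z}\times\yoneda_{X}$ and transposing; your treatment of the separated case, including the observation that separatedness of $\hat{Y}$ turns the pseudo-extension into a strict one, is correct. The gap is your final step. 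Reducing an arbitrary $Y$ to the separated case by ``transporting the closed structure along the product-preserving reflector'' of \textbf{(e)} cannot work: a weak exponential for $X,Y$ requires an evaluation $E\times X\to Y$ landing in $Y$ itself, and every morphism $f\colon Z\times X\to Y$ must factor through it. The reflection unit $\eta_{Y}\colon Y\to RY$ points the wrong way, so $f$ is neither determined by nor recoverable from $Rf\colon RZ\times RX\to RY$, and a weak evaluation into $RY$ yields no morphism into $Y$ at all. Weak exponentials in a reflective subcategory, even with a finite-product-preserving reflector, do not descend to the ambient category; condition \textbf{(e)} serves a different purpose in this circle of ideas (it guarantees that exponentials of separated objects are again separated).

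Deferring that step to \cite[Theorem 5.5]{CHRCCECT} does not close the gap either, because that is not how the cited proof disposes of non-separated objects. There the point is that in $\Cat{(\T,\V)}$ the hull $\hat{Y}=PY=\V^{Y^{{\rm op}}}$ is separated for \emph{every} $Y$, separated or not (since $\V$ is separated and the underlying order of the $\otimes$-exponential is computed pointwise); hence $\yoneda_{Y}$ is always an embedding, the extension along $1_{Z}\times\yoneda_{X}$ is always strict, and your separated-case argument runs verbatim for all objects. In the abstract setting of the present paper this is exactly the additional strength the argument needs beyond \textbf{(c)} as literally stated: the initial morphisms $\yoneda_{Y}\colon Y\to\hat{Y}$ must have separated codomain (equivalently, extensions along initial morphisms into $\hat{Y}$ must be chosen strictly) for \emph{all} $Y$, not only for separated $Y$. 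Your proof establishes weak cartesian closedness of $\cX_{{\rm sep}}$, but for $\cX$ itself the non-separated case remains open, and the reflector cannot be used to supply it.
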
 

Furthermore, we can verify that $\cX_{_{{\rm ex}}}$ is actually locally cartesian closed. Consider the restriction functor $|$-$|\colon\cX_{_{{\rm inj}}}\to\Pfn$, where $\Pfn$ is the category of sets and partial functions. The category $\mathcal{F}(\cX_{_{{\rm inj}}},|$-$|)$, or simply $\mathcal{F}(\cX_{_{{\rm inj}}})$, is described in \cite{MR1787592} as follows: objects are triples $(X,A,\sigma\colon A\to|X|)$, where $X$ is an injective object of $\cX$, $A$ is a set and $\sigma$ is a function; a morphism $f\colon(X,A,\sigma\colon A\to|X|)\to(Y,B,\delta\colon B\to|Y|)$ is a map $f\colon A\to B$ such that there exists $g\colon X\to Y$ in $\cX$ commuting the diagram
\begin{align*}\xymatrix{A \ar[d]_{\sigma} \ar[r]^{f} & B \ar[d]^{\delta} \\ |X| \ar[r]_{|g|} & |Y|.}\end{align*} 

\begin{proposition}\label{prop31}The categories $\cX$ and $\mathcal{F}(\cX_{_{{\rm inj}}})$ are equivalent.\end{proposition}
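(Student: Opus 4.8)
The plan is to exhibit the equivalence through a functor $\Psi\colon\mathcal{F}(\cX_{_{{\rm inj}}})\to\cX$ built from $|$-$|$-initial liftings, whose quasi-inverse $\Phi$ is the injective-hull assignment $W\mapsto(\hat{W},|W|,|\yoneda_W|)$ supplied by condition {\bf (c)}. On objects, $\Psi$ sends a triple $(X,A,\sigma\colon A\to|X|)$ to the $|$-$|$-initial lift $A_\sigma$ of the single-map source $\sigma$ along $X$; thus $|A_\sigma|=A$ and the lifting $\sigma^{\flat}\colon A_\sigma\to X$ is $|$-$|$-initial. On morphisms, given a tracked map $f\colon(X,A,\sigma)\to(Y,B,\delta)$ with realizer $g\colon X\to Y$, the function $f\colon A\to B$ lifts to a necessarily unique $\cX$-morphism $\Psi f\colon A_\sigma\to B_\delta$: indeed $|\delta^{\flat}|\cdot f=\delta\cdot f=|g|\cdot\sigma=|g\cdot\sigma^{\flat}|$ is the underlying map of a morphism into $Y$, so by $|$-$|$-initiality of $\delta^{\flat}$ the map $f$ is itself carried by a morphism $A_\sigma\to B_\delta$. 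Functoriality is clear, since such liftings are unique and composition is computed on underlying maps.

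First I would check that $\Psi$ is \emph{faithful}, which is immediate because $|\Psi f|=f$, so distinct tracked maps yield distinct $\cX$-morphisms. For \emph{essential surjectivity}, given $W\in\cX$ take the $|$-$|$-initial $\yoneda_W\colon W\to\hat{W}$ from {\bf (c)}; then $\Psi(\hat{W},|W|,|\yoneda_W|)$ is by definition the initial lift of $|\yoneda_W|$ along $\hat{W}$, and since $\yoneda_W$ is already $|$-$|$-initial, uniqueness of initial liftings gives $\Psi(\hat{W},|W|,|\yoneda_W|)=W$ on the nose; in particular $\Psi\Phi=1_{\cX}$. The substantive point is \emph{fullness}: for an $\cX$-morphism $k\colon A_\sigma\to B_\delta$ I would set $f=|k|\colon A\to B$ and produce a realizer $g\colon X\to Y$ by extending $\delta^{\flat}\cdot k\colon A_\sigma\to Y$ along the $|$-$|$-initial $\sigma^{\flat}$, using that $Y\in\cX_{_{{\rm inj}}}$; then $\Psi f=k$ by faithfulness, so $f$ witnesses $k$ in the image of $\Psi$ on hom-sets.

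The main obstacle is precisely this last step: injectivity of $Y$ only yields a realizer with $g\cdot\sigma^{\flat}\simeq\delta^{\flat}\cdot k$, whereas a morphism of $\mathcal{F}(\cX_{_{{\rm inj}}})$ demands the on-the-nose equality $|g|\cdot\sigma=\delta\cdot f$ of (partial) maps. This is where the separation of the objects of $\cX_{_{{\rm inj}}}$ is essential: as recorded after condition {\bf (b)}, an extension along a $|$-$|$-initial morphism into a separated injective is strictly commutative, so $g\cdot\sigma^{\flat}=\delta^{\flat}\cdot k$ and hence $|g|\cdot\sigma=|\delta^{\flat}\cdot k|=\delta\cdot f$, as required. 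With faithfulness, fullness and essential surjectivity in hand, $\Psi$ is an equivalence and $\Phi$ is a quasi-inverse; the same separation remark turns the pseudo-retraction $\Phi\Psi(X,A,\sigma)=(\widehat{A_\sigma},A,|\yoneda_{A_\sigma}|)\to(X,A,\sigma)$ coming from injectivity into a genuine, natural isomorphism $\Phi\Psi\cong 1$, closing the argument.
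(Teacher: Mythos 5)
Your construction is the paper's equivalence read from the other side: the paper defines $G\colon\cX\to\mathcal{F}(\cX_{_{{\rm inj}}})$ by $W\mapsto(\hat{W},|W|,|\yoneda_{_{W}}|)$, getting fullness from $|$-$|$-initiality of $\yoneda$ and essential surjectivity from the initial lift of $\sigma$, whereas you take the initial-lift functor $\Psi$ as primary and use injectivity for fullness. The ingredients (initiality for one direction of the hom-correspondence, injectivity for the other, condition {\bf (c)} for the object correspondence) are identical, so this is not a genuinely different route but the quasi-inverse presentation of the same one; your well-definedness, faithfulness and essential surjectivity steps are fine.

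The gap is in the step you yourself single out as the crux. You close it by asserting that ``the objects of $\cX_{_{{\rm inj}}}$ are separated'', but that is not part of the setting: $\cX_{_{{\rm inj}}}$ consists of \emph{all} injectives with respect to $|$-$|$-initial morphisms, with no separation hypothesis (the separated injectives form the smaller category $\cX_{_{{\rm sep,inj}}}$, which is not the one appearing in this proposition). Without separation, injectivity only gives $g\cdot\sigma^{\flat}\simeq\delta^{\flat}\cdot k$, and this genuinely cannot be upgraded to equality of underlying maps. Concretely, take $\cX=\Ord$ (which satisfies {\bf (a)}--{\bf (f)}): let $X=\{*\}$, let $Y=\{0,1\}$ with the chaotic preorder $0\le 1\le 0$ (injective but not separated), let $A=\{a_{1},a_{2}\}$ with $\sigma$ constant and $B=\{b_{1},b_{2}\}$ with $\delta$ bijective. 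Then $A_{\sigma}$ and $B_{\delta}$ are both chaotic, so the bijection $k\colon A_{\sigma}\to B_{\delta}$, $a_{i}\mapsto b_{i}$, is an $\cX$-morphism; but every realizer $g\colon X\to Y$ has constant underlying map, so $|g|\cdot\sigma\neq\delta\cdot|k|$ and $|k|$ is not an $\mathcal{F}$-morphism. Thus $\Psi$ is not full in the stated generality, and no cleverer choice of extension can repair it, since $\cX(A_{\sigma},B_{\delta})$ has four elements while $\mathcal{F}\bigl((X,A,\sigma),(Y,B,\delta)\bigr)$ has two; a count of objects whose endomorphism monoid is the full transformation monoid on two letters even shows $\Ord$ and $\mathcal{F}(\Ord_{_{{\rm inj}}})$ are not abstractly equivalent, so some separation hypothesis is unavoidable. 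To be fair to you, the paper's own proof has the mirror-image defect: its squares $|g|\cdot|\yoneda_{_{X}}|=|\yoneda_{_{Y}}|\cdot|f|$, and the two squares in its essential-surjectivity step, are also only $\simeq$-commutative unless the relevant hulls and injectives are separated. So you have in fact located a real gap in the proposition as stated; both your argument and the paper's go through verbatim once everything is restricted to $\cX_{_{{\rm sep,inj}}}$ (as in the classical case of $T_{_{0}}$-spaces and algebraic lattices), but as a proof of the statement actually asserted, yours does not close.
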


\begin{prf}{\rm Define the functor $G\colon\cX\to\mathcal{F}(\cX_{_{{\rm inj}}})$ by 
\begin{align*}GX=(\hat{X},|X|,\sigma_{_{X}}=|\yoneda_{_{X}}|\colon|X|\to|\hat{X}|),\end{align*} 
where $\yoneda_{_{X}}$ is the $|$-$|$-initial morphism assured by condition {\bf (c)} in the previous section; for each morphism $f\colon X\to Y$, injectivity of $\hat{Y}$ implies the existence of a morphism $g\colon\hat{X}\to\hat{Y}$ extending $\yoneda_{_{Y}}\cdot f$ along $\yoneda_{_{X}}$
\begin{align*}\xymatrix{X \ar[d]_{\yoneda_{_{X}}} \ar[r]^{f} & Y \ar[d]^{\yoneda_{_{Y}}} \\ \hat{X} \ar[r]_{g} & \hat{Y},}\end{align*}
hence we set $Gf=|f|$. $G$ is faithful and to see it is full, let $f\colon|X|\to|Y|$ be a map commuting the diagram
\begin{align*}\xymatrix{|X| \ar[d]_{|\yoneda_{_{X}}|} \ar[r]^{f} & |Y| \ar[d]^{|\yoneda_{_{Y}}|} \\ |\hat{X}| \ar[r]_{|g|} & |\hat{Y}|,}\end{align*} 
for some $g\colon\hat{X}\to\hat{Y}$ in $\cX$, then $|$-$|$-initiality of $\yoneda_{_{Y}}$ implies the existence of a unique $\overline{f}\colon X\to Y$ such that $G\overline{f}=|\overline{f}|=f$. 

For essential surjectivity, let $(X,A,\sigma\colon A\to|X|)$ in $\mathcal{F}(\cX_{_{{\rm inj}}})$ and take the $|$-$|$-initial lifting of $\sigma$, that we denote by $\sigma_{_{{\rm ini}}}\colon A_{_{{\rm ini}}}\to X$, so $|A_{_{{\rm ini}}}|=A$ and $|\sigma_{_{{\rm ini}}}|=\sigma$. Hence $G A_{_{{\rm ini}}}=(\hat{A}_{_{{\rm ini}}},A,|\yoneda_{_{A_{_{{\rm ini}}}}}|\colon A\to|\hat{A}_{_{{\rm ini}}}|)$ and we verify that the identity map $1_{_{A}}\colon A\to A$ is a morphism from $(X,A,\sigma)$ to $(\hat{A}_{_{{\rm ini}}},A,|\yoneda_{_{A_{_{{\rm ini}}}}}|)$, and vice-versa. The latter fact comes readly from injectivity of $\hat{A}_{_{{\rm ini}}}$ and $|$-$|$-initiality of $\sigma_{_{{\rm ini}}}$: 
\begin{align*}\xymatrix{A \ar[d]_{\sigma} \ar[r]^{1_{_{A}}} & A \ar[d]^{|\yoneda_{_{A_{_{{\rm ini}}}}}|} \\ |X| \ar[r]_{|g_{_{1}}|} & |\hat{A}_{_{{\rm ini}}}|,}\end{align*}
for some morphism $g_{_{1}}\colon X\to\hat{A}_{_{{\rm ini}}}$, and by injectivity of $X$ and $|$-$|$-initiality of $\yoneda_{_{A_{_{{\rm ini}}}}}$:
\begin{align*}\xymatrix{A \ar[d]_{|\yoneda_{_{A_{_{{\rm ini}}}}}|} \ar[r]^{1_{_{A}}} & A \ar[d]^{\sigma} \\ |\hat{A}_{_{{\rm ini}}}| \ar[r]_{|g_{_{2}}|} & |X|,}\end{align*}
for some morphism $g_{_{2}}\colon\hat{A}_{_{{\rm ini}}}\to X$. Therefore, $G A_{_{{\rm ini}}}\cong(X,A,\sigma\colon A\to|X|)$ in $\mathcal{F}(\cX_{_{{\rm inj}}})$.}\end{prf}

Since $\cX_{_{{\rm inj}}}$ is (weakly) cartesian closed, as shown in \cite{MR1787592}, $\cX\cong\mathcal{F}(\cX_{_{{\rm inj}}})$ has all weak simple products (in particular it is weakly cartesian closed), and more, $\cX\cong\mathcal{F}(\cX_{_{{\rm inj}}})$ is weakly locally cartesian closed, i.e. it has weak dependent products, whence by \cite[Theorem 3.3]{MR1787592}, $\cX_{_{{\rm ex}}}\cong\mathcal{F}(\cX_{_{{\rm inj}}})_{_{{\rm ex}}}$ is locally cartesian closed.

Therefore, by Theorem \ref{teo31}, we conclude that $\cX$-$\Equ$ is locally cartesian closed (see for instance \cite[III-Corollary 4.6.2]{MR3307673}), and, being complete and cocomplete, one may ask whether this category is actually a quasitopos.

As discussed in \cite{esfsgr}, {\it ``... the full subcategory of $\C_{_{{\rm ex}}}$ consisting of those equivalence spans which are kernel pairs in $\C$ gives the free regular completion $\C_{_{{\rm reg}}}$ of $\C$.''}, where in that context {\it equivalence span} means pseudo-equivalence relation. Hence, similar to what is observed in \cite{ecatmm} for topological spaces, the category $\cX$-$\Equ$, presented by ${\rm PER}(\cX,{\rm RegMono})$, is equivalent to the regular completion $\cX_{_{{\rm reg}}}$ of $\cX$.

It is easy to depict the latter equivalence using the classical description of $\cX_{_{{\rm reg}}}$ \cite{MR1358759}: objects are $\cX$-morphisms $f\colon X\to Y$, and a morphism from $f\colon X\to Y$ to $g\colon Z\to W$ is an equivalence class $[l]$ of an $\cX$-morphism $l\colon X\to Z$ such that $g\cdot l\cdot f_{_{0}}=g\cdot l\cdot f_{_{1}}$, where $f_{_{0}},f_{_{1}}$ form the kernel pair of $f$. 
\begin{align*}\xymatrix{{\rm Ker}(f) \ar[d]_{f_{_{0}}} \ar[r]^(0.55){f_{_{1}}} \ar@{}[dr]|(0.3){\text{\pigpenfont J}} & X \ar[d]^{f} \\ X \ar[r]_{f} & Y}\end{align*}
Two such arrows $l$ and $m$ are equivalent if $g\cdot l=g\cdot m$.
\begin{align*}\xymatrix{X \ar[dd]_{f} & & Z \ar[dd]^{g} \\ \ar[rr]^{[l]} & & \\ Y & & W}\end{align*}

\begin{lemma}\label{lem32}$\cX_{_{{\rm reg}}}$ and ${\rm PER}(\cX,{\rm RegMono})$ are equivalent.\end{lemma}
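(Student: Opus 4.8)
The plan is to exhibit a functor $\Phi\colon\cX_{_{{\rm reg}}}\to{\rm PER}(\cX,{\rm RegMono})$ sending an object $f\colon X\to Y$ to its kernel pair $(f_{_{0}},f_{_{1}})\colon{\rm Ker}(f)\to X$, and a morphism class $[l]$ to the class $[l]$ of the same underlying $\cX$-morphism $l\colon X\to Z$, and then to prove that $\Phi$ is fully faithful and essentially surjective. First I would check that $\Phi$ lands in ${\rm PER}(\cX,{\rm RegMono})$: a kernel pair is a genuine equivalence relation, hence reflexive, symmetric and transitive, and $\langle f_{_{0}},f_{_{1}}\rangle\colon{\rm Ker}(f)\to X\times X$ is a monomorphism by the universal property of the kernel pair. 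Being computed as a limit, it is moreover $|$-$|$-initial over the subset $\{(x,x')\in|X|\times|X|\mid |f|(x)=|f|(x')\}$, and this underlying map is injective; so $\langle f_{_{0}},f_{_{1}}\rangle$ is a regular monomorphism and the kernel pair indeed belongs to ${\rm PER}(\cX,{\rm RegMono})$.

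For full faithfulness I would fix objects $f\colon X\to Y$ and $g\colon Z\to W$, with kernel-pair projections $g_{_{0}},g_{_{1}}\colon{\rm Ker}(g)\to Z$, and verify that the $\cX$-morphisms $l\colon X\to Z$ representing a morphism in $\cX_{_{{\rm reg}}}$ are exactly those representing a morphism in ${\rm PER}(\cX,{\rm RegMono})$, and that the two defining equivalence relations coincide. The $\cX_{_{{\rm reg}}}$-condition $g\cdot l\cdot f_{_{0}}=g\cdot l\cdot f_{_{1}}$ says, on underlying sets, precisely that $\langle l\cdot f_{_{0}},l\cdot f_{_{1}}\rangle$ factors through $|{\rm Ker}(g)|$; the ${\rm PER}$-comparison $k\colon{\rm Ker}(f)\to{\rm Ker}(g)$ with $l\cdot f_{_{i}}=g_{_{i}}\cdot k$ is then obtained by lifting this factorization along the $|$-$|$-initial $\langle g_{_{0}},g_{_{1}}\rangle$, while the converse is immediate from $g\cdot g_{_{0}}=g\cdot g_{_{1}}$. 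Likewise, the relation identifying $l$ and $m$ in ${\rm PER}(\cX,{\rm RegMono})$, namely the existence of $h\colon X\to{\rm Ker}(g)$ with $l=g_{_{0}}\cdot h$ and $m=g_{_{1}}\cdot h$, holds iff $|g|\cdot|l|=|g|\cdot|m|$ — again lifting $\langle l,m\rangle$ via $|$-$|$-initiality — and since $|$-$|$ is faithful this is exactly the relation $g\cdot l=g\cdot m$ defining equality in $\cX_{_{{\rm reg}}}$. Hence $[l]\mapsto[l]$ is a well-defined bijection on hom-sets.

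For essential surjectivity, starting from $(r_{_{1}},r_{_{2}})\colon X_{_{1}}\to X_{_{0}}$ in ${\rm PER}(\cX,{\rm RegMono})$, I would form the coequalizer $q\colon X_{_{0}}\to Q$ of $r_{_{1}},r_{_{2}}$ in $\cX$. Since $|$-$|$ preserves colimits, $|q|$ is the quotient of $|X_{_{0}}|$ by the underlying relation, which is already an equivalence relation because $\langle r_{_{1}},r_{_{2}}\rangle$ is a monic reflexive-symmetric-transitive relation; consequently $|{\rm Ker}(q)|=\{(x,x')\mid |q|(x)=|q|(x')\}$ coincides, as a subset of $|X_{_{0}}|\times|X_{_{0}}|$, with the image of $|\langle r_{_{1}},r_{_{2}}\rangle|$. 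Both ${\rm Ker}(q)\to X_{_{0}}\times X_{_{0}}$ and $\langle r_{_{1}},r_{_{2}}\rangle\colon X_{_{1}}\to X_{_{0}}\times X_{_{0}}$ are $|$-$|$-initial liftings of this same subset, so by uniqueness of initial liftings they are isomorphic over $X_{_{0}}\times X_{_{0}}$, giving an isomorphism $\Phi(q)\cong(X_{_{1}}\rightrightarrows X_{_{0}})$ in ${\rm PER}(\cX,{\rm RegMono})$.

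I expect the main obstacle to be this essential-surjectivity step, i.e. showing that every regular-mono pseudo-equivalence relation is effective (a kernel pair). This is precisely where topologicity of $|$-$|$ is indispensable, as it reduces the claim to the evident effectiveness of equivalence relations in $\Set$ combined with uniqueness of $|$-$|$-initial liftings. By contrast, the fully faithful part is a routine translation between the two morphism descriptions, whose only recurring ingredient is the lifting of set-level factorizations through the $|$-$|$-initial comparison maps $\langle g_{_{0}},g_{_{1}}\rangle$.
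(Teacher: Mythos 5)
Your proposal is correct and follows essentially the same route as the paper: the same kernel-pair functor, full faithfulness by translating the two morphism descriptions (the paper uses the pullback universal property where you invoke $|$-$|$-initiality, which amounts to the same thing), and essential surjectivity by exhibiting each regular-mono pseudo-equivalence relation as the kernel pair of its quotient. Your coequalizer $q\colon X_{_{0}}\to Q$ is precisely the paper's $|$-$|$-final lifting $\overline{p}\colon X_{_{0}}\to\tilde{X}_{_{0}}$ of the set-level projection, since $|$-$|$ preserves colimits and the cocone's final structure is determined by $q$ alone.
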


\begin{prf}{\rm Define $F\colon\cX_{_{{\rm reg}}}\to{\rm PER}(\cX,{\rm RegMono})$ as in the diagram below,
\begin{align*}\xymatrix{(f\colon X\to Y) \ar@{|->}[rr]^{} \ar[d]_{[l]} & & ({\rm Ker}(f),X,f_{_{0}},f_{_{1}}) \ar[d]^{[l]} \\ (g\colon Z\to W) \ar@{|->}[rr]_{} & & ({\rm Ker}(g),Z,g_{_{0}},g_{_{1}})}\end{align*} 
so it is a well-defined functor, since $l\colon X\to Z$ satisfies $g\cdot l\cdot f_{_{0}}=g\cdot l\cdot f_{_{1}}$ if, and only if, there exists a unique $\overline{l}\colon{\rm Ker}(f)\to{\rm Ker}(g)$ such that $g_{_{0}}\cdot\overline{l}=l\cdot f_{_{0}}$ and $g_{_{1}}\cdot\overline{l}=l\cdot f_{_{1}}$.
\begin{align*}\xymatrix{{\rm Ker}(f) \ar@/_1.5pc/[ddr]_{l\cdot f_{_{0}}} \ar[dr]^{\overline{l}} \ar@/^1.5pc/[drr]^{l\cdot f_{_{1}}} & & \\ & {\rm Ker}(g) \ar[d]_{g_{_{0}}} \ar[r]^{g_{_{1}}} \ar@{}[dr]|(0.3){\text{\pigpenfont J}} & Z \ar[d]^{g} \\ & Z \ar[r]_{g} & W}\end{align*} 
Then $F$ is fully faithful, and it is essentially surjective because each pseudo-equivalence relation $\xymatrix{X_{_{1}} \ar@<0.7ex>[r]^{r_{_{1}}} \ar@<-0.7ex>[r]_{r_{_{2}}} & X_{_{0}}}$ with $\left\langle r_{_{1}},r_{_{2}}\right\rangle\colon X_{_{1}}\to X_{_{0}}\times X_{_{0}}$ a regular monomorphism is seen to form the kernel pair of the $|$-$|$-final lifting $\overline{p}\colon X_{_{0}}\to\tilde{X}_{_{0}}$ of the projection map $p\colon|X_{_{0}}|\to|X_{_{0}}|/\hspace{-0.1cm}\sim$, where $\sim$ is the equivalence relation on $|X_{_{0}}|$ defined in the proof of Lemma \ref{lem30}.}\end{prf}  

We now intend to apply \cite[Corollary 8.4.2]{ecatmm}; by condition {\bf (f)} and Proposition \ref{prop31}, $\cX$ is an (infinitely) extensive weakly locally cartesian closed category, hence we are only missing the {\it chaotic situation} described right after \cite[Lemma 7.3.3]{ecatmm}. This comes from the observation that the topos $\Set$ is a mono-localization of $\cX$, since the topological functor $|$-$|\colon\cX\to\Set$ is faithful, preserves finite limits and has a full embedding as a right adjoint \cite[Proposition 21.12]{MR1051419}. Therefore, by Lemma \ref{lem30}, Lemma \ref{lem32} and \cite[Corollary 8.4.2]{ecatmm}, we conclude: 

\begin{theorem}\label{teo2}$\cX$-$\Equ$ is a quasitopos.\end{theorem}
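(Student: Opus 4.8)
The plan is to reduce the assertion to a structural theorem about free regular completions and then to check its hypotheses against the properties already accumulated for $\cX$. First I would record that, chaining Lemma \ref{lem30} with Lemma \ref{lem32}, one obtains
\begin{align*}\cX\text{-}\Equ \ \cong \ {\rm PER}(\cX,{\rm RegMono}) \ \cong \ \cX_{_{{\rm reg}}},\end{align*}
so that proving $\cX$-$\Equ$ to be a quasitopos is the same as proving that the regular completion $\cX_{_{{\rm reg}}}$ is one. The target result is \cite[Corollary 8.4.2]{ecatmm}, which guarantees that the free regular completion of a base category is a quasitopos once the base is (infinitely) extensive, weakly locally cartesian closed, and sits in the \emph{chaotic situation} of \cite[Lemma 7.3.3]{ecatmm}.

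Second, I would verify the first two of these conditions, which are already in hand. Extensivity is precisely hypothesis \textbf{(f)}. For weak local cartesian closedness I would appeal to Proposition \ref{prop31}: transporting along the equivalence $\cX \cong \mathcal{F}(\cX_{_{{\rm inj}}})$, and using that $\cX_{_{{\rm inj}}}$ is (weakly) cartesian closed so that $\mathcal{F}(\cX_{_{{\rm inj}}})$ carries weak simple products and weak dependent products, one concludes that $\cX$ itself is weakly locally cartesian closed. At this stage the only missing ingredient is the chaotic situation.

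Third --- and this is the step I expect to be the main obstacle --- I would establish the chaotic situation by exhibiting the topos $\Set$ as a mono-localization of $\cX$. Concretely, the topological functor $|$-$|\colon\cX\to\Set$ is faithful, preserves finite limits, and admits a full embedding as a right adjoint \cite[Proposition 21.12]{MR1051419}; these are exactly the three properties that identify $\Set$ as a mono-localization, and hence place $\cX$ in the required configuration. The delicate point is not any single calculation but the careful matching of the abstract hypotheses of \cite[Lemma 7.3.3]{ecatmm} with these concrete features of a topological functor, together with checking that the weak local cartesian closedness produced through Proposition \ref{prop31} is of the precise form demanded by the corollary. Once these identifications are secured, \cite[Corollary 8.4.2]{ecatmm} applies verbatim and yields that $\cX_{_{{\rm reg}}}$, and therefore $\cX$-$\Equ$, is a quasitopos.
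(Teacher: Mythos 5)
Your proposal is correct and follows essentially the same route as the paper: the identification $\cX$-$\Equ\cong{\rm PER}(\cX,{\rm RegMono})\cong\cX_{_{{\rm reg}}}$ via Lemmas \ref{lem30} and \ref{lem32}, extensivity from condition \textbf{(f)}, weak local cartesian closedness via Proposition \ref{prop31} and the (weak) cartesian closedness of $\cX_{_{{\rm inj}}}$, the chaotic situation from $\Set$ being a mono-localization of $\cX$ (faithfulness, finite-limit preservation, and the full embedding right adjoint of the topological functor, \cite[Proposition 21.12]{MR1051419}), and finally \cite[Corollary 8.4.2]{ecatmm}. No gaps to report.
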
 

\section{The case \texorpdfstring{$\cX$}{X}\texorpdfstring{$=$}{=}\texorpdfstring{$\Cat{(\T,\V)}$}{(T,V)-Cat}}\label{sec4}

We briefly introduce the $\Cat{(\T,\V)}$ setting, and refer the reader to the reference \cite{MR1957813} for details (see also \cite{MR3307673}). 

Although introduced in a more general setting, we are interested here in the case when \\
$\bullet$ $\V=(\V,\otimes,k)$ is a commutative unital quantale (see for instance \cite[II-Section 1.10]{MR3307673}) which is also a {\it Heyting algebra} (so that the operation infimum $\wedge$ also has a right adjoint), and \\
$\bullet$ $\T=(T,m,e)\colon\Set\to\Set$ is a monad satisfying the Beck-Chevalley condition ($T$ preserves weak pullbacks and the naturality squares of $m$ are weak pullbacks \cite{MR3175322}) that is laxly extended to the pre-ordered category $\Rell{\V}$, which has as objects sets and as morphisms $\V$-relations $r\colon X\rel Y$, i.e. $\V$-valued maps $r\colon X\times Y\to\V$.

Hence we assume that there exists a functor $T\colon\Rell{\V}\to\Rell{\V}$ extending $T$, by abuse of notation denoted by the same letter, that commutes with involution: $T(r^{\circ})=(Tr)^{\circ}$, for each $r\colon X\rel Y\in\Rell{\V}$, where $r^{\circ}(y,x)=r(x,y)$, for each $(x,y)\in X\times Y$.

The functor $T$ turns $m$ and $e$ into oplax transformations, meaning that the naturality diagrams become:
\begin{align*}\xymatrix{X \ar[r]^{e_{X}} \ar[d]|(0.45){\object@{|}}_(0.45){r} \ar@{}[dr]|{\le} & TX \ar[d]|(0.45){\object@{|}}^(0.45){Tr} & T^{2}X \ar[l]_{m_{X}} \ar[d]|(0.45){\object@{|}}^(0.45){T^{2}r} \ar@{}[dl]|{\ge} \\ Y \ar[r]_{e_{Y}} & TY & T^{2}Y, \ar[l]^{m_{Y}}}\end{align*}
for each $\V$-relation $r\colon X\rel Y$. 

Hence we have a lax monad on $\Rell{\V}$ \cite{MR2116322} and $\Cat{(\T,\V)}$ is defined as the category of Eilenberg-Moore lax algebras for that lax monad: objects are pairs $(X,a)$, where $X$ is a set and $a\colon TX\rel X$ is a $\V$-relation, which is reflexive and transitive.
\begin{align*}\xymatrix{X \ar[r]^{e_{X}} \ar@/_1pc/[dr]_{1_{X}} & TX \ar[d]|(0.45){\object@{|}}^(0.45){a} & T^{2}X \ar[l]|{\object@{|}}_{Ta} \ar[d]^{m_{X}} \ar@{}[dl]|{\le} \\ \ar@{}[ur]|{\le} & X & TX \ar[l]|(0.45){\object@{|}}^(0.45){a}}\end{align*}
Such pairs are called {\it $(\T,\V)$-categories}; a morphism from $(X,a)$ to $(Y,b)$ is a map $f\colon X\to Y$ commuting the diagram below.
\begin{align*}\xymatrix{TX \ar[d]|(0.45){\object@{|}}_(0.45){a} \ar[r]^{Tf} \ar@{}[dr]|{\le} & TY \ar[d]|(0.45){\object@{|}}^(0.45){b} \\ X \ar[r]_{f} & Y}\end{align*}
Such a map is called {\it $(\T,\V)$-functor}. 

We are also going to restrict ourselves to the case that the extension $T$ to $\Rell{\V}$ is determined by a $\T$-algebra structure map $\xi\colon TV\to V$, so we are in the setting of {\it topological theories} \cite{MR2355608} (see also \cite{MR3330902}), hence $\V$ has a $(\T,\V)$-category structure given by the composite
\begin{align*}\xymatrix{T\V \ar[rr]^{\xi} & & \V \ar[rr]|(0.45){\object@{|}}^(0.45){{\rm hom}} & & \V,}\end{align*}
where ${\rm hom}\colon\V\times\V\to\V$ is the left adjoint of $\otimes$, so 
\begin{align*}u\otimes v\leq w \ \Longleftrightarrow \ u\leq{\rm hom}(v,w),\end{align*}
for each $u,v,w$ in the quantale $\V$.

The forgetful functor $|$-$|\colon\Cat{(\T,\V)}\to\Set$ is topological \cite{MR1990036,MR1957813}, and before we provide examples of categories given by $\Cat{(\T,\V)}$, we verify that, for suitable monad $\T$ and quantale $\V$ satisfying the conditions assumed so far in this section, $\Cat{(\T,\V)}$ satisfies all conditions {\bf (a)} to {\bf (f)} from the two previous sections. In each item, we highlight the properties that are needed in order to achieve the respective condition, adding the references where that is proved. \vspace{0.4cm}

\noindent{\bf (a)} {\it $\Cat{(\T,\V)}$ is pre-ordered enriched.} For $(\T,\V)$-categories $(X,a)$ and $(Y,b)$, consider the following relation on the set of $(\T,\V)$-functors  from $(X,a)$ to $(Y,b)$:
\begin{align*}f\leq g \ \Longleftrightarrow \ \forall x\in X, \ k\leq b(e_{_{Y}}(f(x)),g(x)).\end{align*}
This determines a pre-order, first defined in \cite{MR1957813}, which is compatible with composition of $(\T,\V)$-functors. One can also check that a $(\T,\V)$-category $(X,a)$ is separated if, and only if, the following pre-order on $X$ is anti-symmetric:
\begin{align*}x\leq x' \ \Longleftrightarrow \ k\leq a(e_{_{X}}(x),x')\end{align*}
(see \cite[III-Proposition 3.3.1]{MR3307673}). \vspace{0.4cm}

\noindent{\bf (b)} {\it $|$-$|$-initial $(\T,\V)$-functors reflect the order.} Let $(X,a),(Y,b)$ be $(\T,\V)$-categories, $x,x'\colon\one\to(X,a)$ be $(\T,\V)$-functors, where $\one=(\{*\},e_{_{\{*\}}}^{\circ})$ (the discrete structure on the singleton \cite[III-Section 3.2]{MR3307673}), and $f\colon(X,a)\to(Y,b)$ an $|$-$|$-initial $(\T,\V)$-functor such that $f\cdot x\simeq f\cdot x'$; $|$-$|$-initiality of $f$ means that $a(\x,x)=b(Tf(\x),f(x))$, for each $\x\in TX$, $x\in X$. We calculate:
\begin{align*}\begin{array}{rcll} k & \leq & b(e_{_{Y}}(f\cdot x(*)),f\cdot x'(*)) & (\text{definition of $f\cdot x\leq f\cdot x'$}) \\
& \leq & b(Tf\cdot e_{_{X}}(x(*)),f\cdot x'(*)) & (\text{composition is associative, $e$ is natural}) \\
& \leq & a(e_{_{X}}(x(*)),x'(*)) & (\text{$f$ is $|$-$|$-initial}),\end{array}\end{align*}
so $x\leq x'$ and in the same fashion $x'\leq x$, thus $x\simeq x'$. \vspace{0.4cm}

\noindent{\bf (c)} {\it $\Cat{(\T,\V)}$ has enough injectives.} The tensor product $\otimes$ of $\V$ induces a functor $\otimes\colon\Cat{(\T,\V)}\times\Cat{(\T,\V)}\to\Cat{(\T,\V)}$, with 
\begin{align*}(X,a)\otimes(Y,b)=(X\times Y,c),\end{align*}
where, for each $\w\in T(X\times Y)$, $(x,y)\in X\times Y$,
\begin{align*}c(\w,(x,y))=a(T\pi_{_{X}}(\w),x)\otimes b(T\pi_{_{Y}}(\w),y),\end{align*}
and $\pi_{_{X}},\pi_{_{Y}}$ are the projections from $X\times Y$ onto $X$ and $Y$, respectively. The following facts can be found in \cite{MR2491799, MR2729224, MR3416170}: for each $(\T,\V)$-category $(X,a)$, $a\colon TX\rel X$ defines a $(\T,\V)$-functor
\begin{align*}a\colon X^{{\rm op}}\otimes X\to\V,\end{align*}
where $X^{{\rm op}}=(TX,m_{_{X}}\cdot(Ta)^{\circ}\cdot m_{_{X}})$; the $\otimes$-exponential mate $\yoneda_{_{X}}\colon X\to\V^{X^{{\rm op}}}$ of $a$ is fully faithful; the $(\T,\V)$-category $PX=\V^{X^{{\rm op}}}$ is injective and if $(X,a)$ is separated, so is $PX$. \vspace{0.4cm}

\noindent{\bf (d)} {\it Injectives are exponentiable.} Conditions under which injectivity implies exponentiability in $\Cat{(\T,\V)}$ are studied in \cite{CHRCCECT}. We recall them next. Consider the maps
\begin{align}\label{inj1}\xymatrix{\V\otimes\V \ar[rr]^{\otimes} & & \V} \ \ \ \text{and} \ \ \ \xymatrix{X \ar[rr]^{(-,u)} & & X\otimes\V},\end{align}
with $(\V,\hom_{_{\xi}}),(X,a)\in\Cat{(\T,\V)}$. Define also for a $\V$-relation $r\colon X\rel Y$ and $u\in\V$, the $\V$-relation $r\otimes u\colon X\rel Y$ given by
\begin{align}\label{inj2}(r\otimes u)(x,y)=r(x,y)\otimes u,\end{align}
for each $(x,y)\in X\times Y$. As a final condition, assume that, for all $u,v,w\in\V$,
\begin{align}\label{inj3}w\wedge(u\otimes v)=\{u'\otimes v' \ | \ u'\leq u, \ v'\leq v, \ u'\otimes v'\leq w\},\end{align}
which is equivalent to exponentiability of injective $\V$-categories (see \cite[Theorem 5.3]{MR2981702}). Then \cite[Theorem 5.4]{CHRCCECT} says the following:
\begin{theorem}Suppose that: the maps $\otimes$ and $(-,u)$ in (\ref{inj1}) are $(\T,\V)$-functors; for every injective $(\T,\V)$-category $(X,a)$ and every $u\in\V$, $T(a\otimes u)=Ta\otimes u$, with those $\V$-relations defined as in (\ref{inj2}); and (\ref{inj3}) holds. Then every injective $(\T,\V)$-category is exponentiable in $\Cat{(\T,\V)}$.\end{theorem}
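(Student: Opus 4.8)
The plan is to prove exponentiability of an injective $(\T,\V)$-category $(X,a)$ by exhibiting, for every $(\T,\V)$-category $(Z,c)$, a right adjoint to the functor $(-)\otimes(X,a)$, or equivalently by constructing the exponential $(Y,b)^{(X,a)}$ with a universal evaluation. Since injective objects are retracts of the injective power objects $PX=\V^{X^{\mathrm{op}}}$ described in condition \textbf{(c)}, and since the class of exponentiable objects is closed under retracts, it suffices to establish exponentiability for the cofree-type objects $\V^{X^{\mathrm{op}}}$; the real engine is then that $\V$ itself, equipped with $\hom_\xi$, behaves like an exponentiable object with respect to the tensor $\otimes$. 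First I would record that the hypotheses are precisely calibrated so that the two maps in~(\ref{inj1}) are $(\T,\V)$-functors, which says that $(\V,\hom_\xi)$ carries a compatible monoidal-closed structure at the level of the underlying category: the map $\otimes\colon\V\otimes\V\to\V$ being a $(\T,\V)$-functor is what allows one to transport the pointwise $\hom$ into a genuine $(\T,\V)$-category structure on the candidate exponential.

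The concrete construction I would carry out is the standard one for $\Cat{(\T,\V)}$: define the exponential on underlying sets as the set of $(\T,\V)$-functors (or more precisely the appropriate function space), and endow it with the $\V$-relation obtained by taking an infimum over $X$ of $\hom$-values coming from $b$ and the structure $a$. The verification that this $\V$-relation is reflexive and transitive, and that the evaluation map is a $(\T,\V)$-functor satisfying the universal property, is where the hypotheses of the theorem are consumed one by one. The condition $T(a\otimes u)=Ta\otimes u$ is what lets the monad $T$ commute past the tensoring-by-a-constant operation~(\ref{inj2}), which is exactly the compatibility needed to push the lax algebra inequalities through $T$ when checking that the transpose of a $(\T,\V)$-functor is again a $(\T,\V)$-functor. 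The distributivity law~(\ref{inj3}), equivalent by \cite[Theorem 5.3]{MR2981702} to exponentiability of injective $\V$-categories, is what guarantees that the $\V$-level transposition works, i.e.\ that $\otimes$ and $\wedge$ interact so that the adjunction $u\otimes v\leq w\Leftrightarrow u\leq\hom(v,w)$ lifts coherently to the category of lax algebras rather than merely to $\V$.

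I expect the main obstacle to be the interplay between the monad $T$ and the closed structure: proving that the pointwise-$\hom$ $\V$-relation on the function space is transitive requires composing $T$-images of the structure maps and invoking both the oplax naturality of $m,e$ and the hypothesis $T(a\otimes u)=Ta\otimes u$, and this is where a naive calculation can fail without the Beck--Chevalley condition ensuring $T$ preserves weak pullbacks. Concretely, the step where one rewrites $b(T(\mathrm{ev})(\mathfrak{w}),\cdots)$ in terms of $a$ and $b$ separately, and then reassembles the result into a $\hom$-value, is the delicate point; the distributivity~(\ref{inj3}) is precisely the identity that makes this reassembly valid. Once exponentiability of $\V^{X^{\mathrm{op}}}$ is in hand, the passage to arbitrary injectives is routine closure-under-retracts, and the theorem follows. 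I would therefore spend the bulk of the argument on the transitivity and universal-property verifications for the power object, treating the retract step as a short corollary.
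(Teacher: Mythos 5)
Your reduction to the power objects $\V^{X^{\mathrm{op}}}$ is legitimate in outline (in a finitely complete category, exponentiable objects are closed under retracts, by splitting the idempotent induced on the exponential), but the core of your argument --- exponentiability of $\V^{X^{\mathrm{op}}}$, or of any injective --- is missing, not merely compressed. The construction you describe (take the function space, endow it with the pointwise-$\hom$ $\V$-relation, verify reflexivity, transitivity and the universal property) makes no use of any special property of the object being exponentiated: as written, the same argument would ``prove'' that \emph{every} $(\T,\V)$-category is exponentiable. That conclusion is false: for $\T=\U$, $\V=\two$ the exponentiable objects of $\Top$ are exactly the core-compact spaces, yet all hypotheses of the theorem hold in that case (condition (\ref{inj3}) is trivially true in $\two$, and $\Top$ is one of the intended examples). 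So the mechanism you assign to the hypotheses cannot be the real one: (\ref{inj3}) together with $T(a\otimes u)=Ta\otimes u$ cannot by themselves make the transitivity and universal-property checks go through; injectivity must enter the verification in an essential way, and your proposal never says where. Your appeal to the Beck--Chevalley condition is likewise a red herring, since that is a blanket assumption of the whole framework rather than the engine of this particular theorem.

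Note that the paper itself does not prove this statement; it quotes it from \cite[Theorem 5.4]{CHRCCECT}, and the proof there runs through the pointwise exponentiability criterion of \cite{MR2355608}: $(X,a)$ is exponentiable if and only if, for all $\mathfrak{X}\in TTX$, $x\in X$ and $u,v\in\V$,
\begin{align*}
a(m_{X}(\mathfrak{X}),x)\wedge(u\otimes v)\ \leq\ \bigvee_{\mathfrak{x}\in TX}\bigl(Ta(\mathfrak{X},\mathfrak{x})\wedge u\bigr)\otimes\bigl(a(\mathfrak{x},x)\wedge v\bigr).
\end{align*}
Here the role of (\ref{inj3}) is precise: it decomposes the left-hand side as a join of elements $u'\otimes v'$ with $u'\leq u$, $v'\leq v$ and $u'\otimes v'\leq a(m_{X}(\mathfrak{X}),x)$, reducing the problem to producing, for each such pair, an interpolating $\mathfrak{x}\in TX$ with $Ta(\mathfrak{X},\mathfrak{x})\geq u'$ and $a(\mathfrak{x},x)\geq v'$. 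This interpolant is where injectivity is consumed: it supplies a tensor-by-$u'$ operation on $X$ (the $(\T,\V)$-analogue of the copower $x\otimes u'$ used by Hofmann and Reis \cite{MR2981702} in the $\V$-category case), and the remaining hypotheses --- that $\otimes$ and $(-,u)$ in (\ref{inj1}) are $(\T,\V)$-functors and that $T(a\otimes u)=Ta\otimes u$ as in (\ref{inj2}) --- are exactly what makes that operation compatible with $T$, so that it can be applied under $T$ to manufacture $\mathfrak{x}$. None of this interpolation step appears in your proposal. A secondary point: for a non-separated injective $X$ the retraction of $\yoneda_{X}$ exists only up to equivalence ($r\cdot\yoneda_{X}\simeq 1_{X}$), so your closure-under-retracts step needs a pseudo-retract version and deserves an argument rather than the label ``routine''.
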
 
{\bf (e)} {\it The reflector from $\Cat{(\T,\V)}$ to $\Cat{(\T,\V)}_{_{{\rm sep}}}$ preserves finite products.} This is proved in \cite[Proposition 5.4]{CHRCCECT}. \vspace{0.4cm}

\noindent{\bf (f)} {\it $\Cat{(\T,\V)}$ is infinitely extensive.} This is proved in \cite{MR2248546} under the condition that $T$ is a taut functor \cite{MR1902090}, what comes for free from the assumption that $T$ preserves weak pullbacks. \vspace{0.4cm}

To give examples of categories satisfying all the conditions above, we consider: \vspace{0.4cm}

\noindent$\bullet$ the identity monad $\mathbb{I}=({\rm Id},\one,\one)$ on $\Set$ laxly extended to the identity lax monad on $\Rell{\V}$; \vspace{0.4cm}

\noindent$\bullet$ the ultrafilter monad $\U$ with the Barr extension to $\Rell{\V}$ \cite[IV-Corollary 2.4.5]{MR3307673}, with $\V$ integral and {\it completely distributive} (see, for instance, \cite[II-Section 1.11]{MR3307673}); \vspace{0.4cm}

\noindent$\bullet$ the list monad (or free monoid monad) $\mL=(L,m,e)$ (see \cite[II-Examples 3.1.1(2)]{MR3307673}), with the extension $L\colon\Rell{\V}\to\Rell{\V}$ that sends each $r\colon X\rel Y$ to the $\V$-relation $Lr\colon LX\rel LY$ given by
\[
  Lr((x_{_{1}},\dots,x_{_{n}}),(y_{_{1}},\dots,y_{_{m}}))=
  \begin{cases}
    r(x_{_{1}},y_{_{1}})\otimes\dots\otimes r(x_{_{n}},y_{_{n}}), \ \text{if $n=m$}\\
    \bot, \ \text{if $n\neq m$};
  \end{cases}
\]
$\bullet$ the monad $\mM=(-\times M,m,e)$, for a monoid $(M,\cdot,1_{_{M}})$, with $m_{_{X}}\colon X\times M\times M\to X\times M$ given by $m_{_{X}}(x,a,b)=(x,a\cdot b)$ and $e_{_{X}}\colon X\to X\times M$ given by $e_{_{X}}(x)=(x,1_{_{M}})$ (see \cite[V-Section 1.4]{MR3307673}). The extension $-\times M:\Rell{\V}\to\Rell{\V}$ sends the $\V$-relation $r\colon X\rel Y$ to the $\V$-relation $r\times M\colon X\times M\rel Y\times M$ with 
\[
  r\times M((x,a),(y,b))=
  \begin{cases}
    r(x,y), \ \text{if $a=b$,}\\
    \bot, \ \text{if $a\neq b$.}
  \end{cases}
\]

\noindent As well as the quantales: $\two=(\{\bot,\top\},\wedge,\top)$, $\categ{P}_{_{+}}=([0,\infty]^{\rm{op}},+,0)$, $\categ{P}_{_{{\rm max}}}=([0,\infty]^{\rm{op}},{\rm max},0)$, $\two^{2}=(\{\bot, u, v, \top\},\wedge,\top)$ (the {\it diamond lattice} \cite[II-Exercise 1.H]{MR3307673}) and $\Delta$ (the quantale of distribution functions \cite{MR2981702}). We assemble the table: 
\begin{align}\label{tab1}\text{\begin{tabular}{|l|c|c|c|c|} 
\hline
\backslashbox{$\V$}{$\T$} & $\I$       & $\U$              & $\mL$       & $\mM$                \\ \hline
$\two$                    & $\Ord$     & $\Top$            & $\MultiOrd$ & $\Cat{(\mM,\two)}$   \\ \hline
$\categ{P}_{_{+}}$        & $\Met$     & $\App$            &             &                      \\ \hline
$\categ{P}_{_{{\rm max}}}$& $\UltMet$  & $\NA\text{-}\App$ &             &                      \\ \hline
$\two^{2}$                & $\BiRel$   & $\BiTop$          &             &                      \\ \hline
$\Delta$                  & $\ProbMet$ &                   &             &                      \\ \hline\end{tabular}}\end{align}

\noindent$\bullet$ $\Ord$ is the category of pre-ordered spaces, \vspace{0.1cm}

\noindent$\bullet$ $\Met$ is the category of Lawvere generalized metric spaces \cite{MR1925933}, \vspace{0.1cm}

\noindent$\bullet$ $\UltMet$ is the full subcategory of $\Met$ of ultra-metric spaces \cite[III-Exercise 2.B]{MR3307673}, \vspace{0.1cm}

\noindent$\bullet$ $\BiRel$ is the one of sets and birelations \cite[III-Examples 1.1.1(3)]{MR3307673}, \vspace{0.1cm}

\noindent$\bullet$ $\ProbMet$ is the category of probabilistic metric spaces \cite{MR2981702}, \vspace{0.1cm}

\noindent$\bullet$ $\Top$ is the usual category of topological spaces and continuous functions, \vspace{0.1cm}

\noindent$\bullet$ $\App$ is that of Lowen's approach spaces \cite{MR1472024}, and \vspace{0.1cm}

\noindent$\bullet$ $\NA\text{-}\App$ is the full subcategory of $\App$ of non-Archimedean approach spaces studied in details in \cite{MR3731477}, and denoted in \cite{MR3227304} by $\UApp$, \vspace{0.1cm}

\noindent$\bullet$ $\BiTop$ is the category of bitopological spaces and bicontinuous maps \cite[III-Exercise 2.D]{MR3307673}, \vspace{0.1cm}

\noindent$\bullet$ $\MultiOrd$ is the category of multi-ordered sets \cite[V-Section 1.4]{MR3307673}, and \vspace{0.1cm} 

\noindent$\bullet$ $\Cat{(\mM,\two)}$ can be interpreted as the category of $M$-labelled ordered sets \cite[V-Section 1.4]{MR3307673}.

For instance, an object of $\Ord$-$\Equ$ is a pre-ordered set $(X,\leq)$ together with an equivalence relation $\equiv_{_{X}}$ on $X$; separatedness of $(X,\leq)$ means that $\leq$ is anti-symmetric, so the objects of $\Ord$-$\Equ_{_{{\rm sep}}}$ are partially ordered sets equipped with equivalence relations on their underlying sets. Further, a partial equilogical separated object in $\Ord$-$\PEqu_{_{{\rm sep}}}$ is a complete lattice (injective ordered set) together with an equivalence relation on the underlying set. In the same fashion, the objects of the category $\Mdst(\Ord_{_{{\rm sep,inj}}})$ are triples $(A,(X,\leq),E_{_{A}})$, with $A$ a set, $E_{_{A}}\colon A\to\mathcal{P}X$ a function, and $(X,\leq)$ a complete lattice.

Furthermore, from Section \ref{sec3} we conclude that, together with $\Top$, all the other categories in Table \ref{tab1} are weakly locally cartesian closed and their exact completions are locally cartesian closed categories; moreover, their categories of equilogical objects, which are equivalent to their regular completions, are quasitoposes that fully embed the original categories.   

Concerning four of those categories, we also have adjunctions
\begin{align*}\xymatrix{\Top \ar@{^{(}->}@<-1.1ex>[rr]_{} \ar@<1.1ex>[dd]^{} & & \App \ar@{.>}@<-1.1ex>[ll]^[right]{\dashv} \ar@<1.1ex>[dd]_{\dashv} \\ & & \\ \Ord \ar@{_{(}.>}@<1.1ex>[uu]_{\dashv} \ar@{^{(}->}@<-1.1ex>[rr]^[right]{\dashv} & & \Met, \ar@{.>}@<-1.1ex>[ll]^{} \ar@{_{(}.>}@<1.1ex>[uu]^{}}\end{align*}   
where both solid and dotted diagrams commute, the hook-arrows are full embeddings and the two full embeddings $\Ord\hookrightarrow\App$ coincide (see \cite[III-Section 3.6]{MR3307673}). One can see that those adjunctions extend to the respective categories of equilogical objects,
\begin{align*}\xymatrix{\Equ \ar@{^{(}->}@<-1.1ex>[rr]_{} \ar@<1.1ex>[dd]^{} & & \App\text{-}\Equ \ar@{.>}@<-1.1ex>[ll]^[right]{\dashv} \ar@<1.1ex>[dd]_{\dashv} \\ & & \\ \Ord\text{-}\Equ \ar@{_{(}.>}@<1.1ex>[uu]_{\dashv} \ar@{^{(}->}@<-1.1ex>[rr]^[right]{\dashv} & & \Met\text{-}\Equ \ar@{.>}@<-1.1ex>[ll]^{} \ar@{_{(}.>}@<1.1ex>[uu]^{}}\end{align*}
and we describe them now. \vspace{0.4cm}

\noindent(1) {\it $\Ord$-$\Equ$ to $\Met$-$\Equ$.} Each ordered equilogical object $\left\langle (X,\leq),\equiv_{_{X}}\right\rangle$ is taken to $\left\langle (X,d_{_{\leq}}),\equiv_{_{X}}\right\rangle$, where the metric $d_{_{\leq}}$ is given by 
\begin{align*}d_{_{\leq}}(x,x')=\left\{\begin{array}{rl}0, & \text{if $x\leq x'$} \\ \infty, & \text{otherwise,}\end{array}\right.\end{align*}
for each $x,x'\in X$. The left adjoint of this inclusion assigns $\left\langle (X,\leq_{_{d}}),\equiv_{_{X}}\right\rangle$ to $\left\langle (X,d),\equiv_{_{X}}\right\rangle$, with $x\leq_{_{d}}x'$ if and only if $d(x,x')<\infty$, for each $x,x'\in X$. Hence the category $\Ord$-$\Equ$ is fully embedded in $\Met$-$\Equ$ as the metric equilogical objects $\left\langle (X,d),\equiv_{_{X}}\right\rangle$ for which there exists an order $\leq$ on $X$ such that $d=d_{_{\leq}}$. \vspace{0.4cm}

\noindent(2) {\it $\Ord$-$\Equ$ to $\Equ$.} Each $\left\langle (X,\leq),\equiv_{_{X}}\right\rangle$ is taken to $\left\langle (X,\tau_{_{\leq}}),\equiv_{_{X}}\right\rangle$, where $\tau_{_{\leq}}$ is the Alexandroff topology: open sets are generated by the down-sets $\downarrow\hspace{-0.15cm}x$, $x\in X$. For its right adjoint, to an equilogical space $\left\langle (X,\tau),\equiv_{_{X}}\right\rangle$ is assigned $\left\langle (X,\leq_{_{\tau}}),\equiv_{_{X}}\right\rangle$, where $\leq_{_{\tau}}$ is the specialization order of $(X,\tau)$: for each $x,x'\in X$, $x\leq x'$ if and only if $\dot{x}\rightarrow x'$, where $\dot{x}$ denotes the principal ultrafilter on $x$, and $\rightarrow$ denotes the convergence relation between ultrafilters and points of $X$ determined by $\tau$; observe that this is the induced order described in item {\bf (a)} above. Hence the category $\Ord$-$\Equ$ is fully embedded in $\Equ$ as the equilogical spaces $\left\langle (X,\tau),\equiv_{_{X}}\right\rangle$ for which there exists an order $\leq$ on $X$ such that $\tau=\tau_{_{\leq}}$, and those are exactly the Alexandroff spaces: arbitrary intersections of open sets are open (see \cite[II-Example 5.10.5, III-Example 3.4.3(1)]{MR3307673}). \vspace{0.4cm}

\noindent(3) {\it $\Met$-$\Equ$ to $\App$-$\Equ$.} A metric equilogical object $\left\langle (X,d),\equiv_{_{X}}\right\rangle$ becomes an approach equilogical one $\left\langle (X,\delta_{_{d}}),\equiv_{_{X}}\right\rangle$, where the approach distance is given by $\delta_{_{d}}(x',A)={\rm inf}\{d(x,x') \ | \ x\in A\}$, for each $x'\in X$, $A\in\mathcal{P}X$ \cite[III-Examples 2.4.1(1)]{MR3307673}. The right adjoint of this embedding assigns $\left\langle (X,d_{_{\delta}}),\equiv_{_{X}}\right\rangle$ to $\left\langle (X,\delta),\equiv_{_{X}}\right\rangle$, where $d_{_{\delta}}(x,x')={\rm sup}\{\delta(x',A) \ | \ x\in A\in\mathcal{P}X\}$, for each $x,x'\in X$. Hence $\Met$-$\Equ$ is identified within $\App$-$\Equ$ as the approach equilogical objects $\left\langle (X,\delta),\equiv_{_{X}}\right\rangle$ such that $\delta=\delta_{_{d}}$, for some metric $d$ on $X$, that is, $(X,\delta)$ is a metric approach space \cite[Chapter 3]{MR1472024}. \vspace{0.4cm}

\noindent(4) {\it $\Equ$ to $\App$-$\Equ$.} Each equilogical space $\left\langle (X,\tau),\equiv_{_{X}}\right\rangle$ becomes an approach equilogical one $\left\langle (X,\delta_{_{\tau}}),\equiv_{_{X}}\right\rangle$ with the approach distance given by 
\begin{align*}\delta_{_{\tau}}(x',A)=\left\{\begin{array}{rl} 0, & \text{if $A\in\x$, for some $\x\in UX$ with $\x\rightarrow x'$} \\ \infty, & \text{otherwise}, \end{array}\right.\end{align*} 
for each $x'\in X$, $A\in\mathcal{P}X$, where $UX$ denotes the set of ultrafilters on $X$ \cite[III-Examples 2.4.1(2)]{MR3307673}. The left adjoint of this embedding is slightly more elaborate: for an approach equilogical object $\left\langle (X,\delta),\equiv_{_{X}}\right\rangle$, consider the convergence relation $\rightarrow$ between ultrafilters in $UX$ and points of $X$ given by 
\begin{align*}\x\rightarrow x \ \Longleftrightarrow \ {\rm sup}\{\delta(x,A) \ | \ A\in\x\}<\infty;\end{align*} 
this convergence defines a pseudo-topological space \cite{MR0025716}, to which we apply the reflector described in \cite[III-Exercise 3.D]{MR3307673}, obtaining an equilogical space $\left\langle (X,\tau_{_{\delta}}),\equiv_{_{X}}\right\rangle$. Hence $\Equ$ is identified within $\App$-$\Equ$ as the approach equilogical objects $\left\langle (X,\delta),\equiv_{_{X}}\right\rangle$ such that $\delta=\delta_{_{\tau}}$, for some topology $\tau$ on $X$, that is, $(X,\delta)$ is a topological approach space \cite[Chapter 2]{MR1472024}. 

\vspace{0.5cm}\noindent{\bf Open question.} The conditions {\bf (a)} to {\bf (f)} of Sections \ref{sec2} and \ref{sec3} were derived from the successful attempt of generalizing the structures/constructions to $\Cat{(\T,\V)}$, for suitable $\T$ and $\V$. Requiring those conditions on an arbitrary category with a topological functor over $\Set$ produced the same desired results. However, we do not know an example of a category satisfying those conditions which cannot be described as $\Cat{(\T,\V)}$. 

\section*{Acknowledgments}
This work was done during the preparation of the author's PhD thesis, under the supervision of Maria Manuel Clementino, whom the author thanks for proposing the investigation and advising the whole study. I also thank Fernando Lucatelli Nunes for fruitful discussions.

\end{document}